\documentclass[10pt,amstex,amssymb]{amsart}
\usepackage{graphicx} 
\topmargin 0.3in
\oddsidemargin  0.7in
\evensidemargin 0.7in
\textwidth 5.9in
\textheight 8.in

\def\PP{\mathbb{P}}
\def\EE{\mathbb{E}}

\newtheorem{teo}{Theorem}[section]    
\newtheorem{lem}{Lemma}[section]   
\newtheorem{pro}{Proposition}[section]

\addtolength{\textwidth}{+1.5cm} 
\addtolength{\hoffset}{-2cm}

\parskip 0.25in




\begin{document}

\begin{center}
{\sc \bf 
    SIR EPIDEMICS ON A SCALE-FREE  SPATIAL NESTED MODULAR NETWORK
     WITH NON-TRIVIAL THRESHOLD

}
\end{center}

\vskip .35in

\begin{center}

\vskip .25in

\vskip .15in
Alberto Gandolfi\\
Dipartimento di Matematica U. Dini,\\
 Universit\`{a} di
Firenze,\\ Viale Morgagni 67/A, 50134 Firenze, Italy \\ 
email: 
albertogandolfi@gmail.com\\
\vskip .25in

\vskip .15in
Lorenzo Cecconi\\
Dipartimento di Matematica U. Dini,\\
 Universit\`{a} di
Firenze,\\ Viale Morgagni 67/A, 50134 Firenze, Italy \\ 
email: 
cecconi@math.unifi.it\\
\vskip .25in

\end{center}

\vskip 0.2in

\begin{verse}
\hspace{.25in}{\bf \sc Abstract.}
{\footnotesize 
We propose a class of random scale-free spatial networks with nested
community structures and analyze  Reed-Frost epidemics
with  class related independent transmissions. We show 
that the epidemic threshold may be trivial or not depending
on the relation among community sizes, distribution of the number of communities
and transmission rates.}

\end{verse}

Keywords and phrases: Epidemics, SIR, Reed-Frost, percolation, long-range, directed,
scale-free, modular, nested, communities, hierarchical, threshold,
spatial.

2010 Mathematical Subject Classification: 

{\it Primary} 60K35; 92D30.

{\it Secondary} 93A13; 82B43; 92C60.

\bigskip

\section{Introduction}

We consider a spatial random graph which  at the same time  is scale-free
and has a nested community structure, and study Reed-Frost SIR epidemic (\cite{Kermacketal}, \cite{Neal})  on it. We 
find that with a natural transmission mechanism, in which
transmissions occur independently with rates related to community
sizes,  the critical threshold is trivial or not
depending on the relation between community sizes, distribution 
of number of communities to which each individual belongs and
rate of the decay of the transmission probability 
as the community size increases.
  Scale-free networks (\cite{BarabasiAlbert}, \cite{AlbertJeongBarabasi},
  \cite{EriksenHornquist}) have been widely studied in the context of epidemics
(see  \cite{SanderWarrenSokolov} and
\cite{BrittonJansonMartinLof}) suggesting at first that this might 
lead to triviality of the critical threshold (\cite{MorenoGomezPacheco},
\cite{KephartSorkinChess}, \cite{KephartWhiteChess}, \cite{ZhouFuWang}).
On the other hand, most scale-free networks lack a spatial dimensionality, 
which is quite relevant to make the models more realistic (see e.g. \cite{Britton}): one of the few prosed networks possessing both features has been 
suggested by Yukich \cite{Yukich}.
Yukich's network is, however, missing network modularity,
 i.e. the gathering of individuals in 
communities with
faster transmission rates 
 (see \cite{Bartoszynski}, \cite{BeckerDietz}, \cite{BallMollisonScalia-Tomba},  \cite{BallSirlTrapman1} and \cite{BallSirlTrapman2}),
a feature which has gained recent interest due to its relevance
in infectious transmission. The formation of communities can be described
by several mechanisms, such as random intersection  in which 
extra vertices randomly connect to the vertices of the
graph and links are then generated between vertices connected
to a common extra vertex (see \cite{BrittonDeijfenAndreasLagerasLindholm} for a description of
random intersection and a review of other mechanisms). However, most 
real community structures are
nested (see, e.g., \cite{StokolsClitheroe}, \cite{Tropmanetal} and \cite{Costelloetal}) unlike the networks generated by random intersection
and similar mechanisms.

The class of random networks discussed here have spatial features, are scale-free and possess a nested community structure.
The 
networks are based on a connectivity graph, which, for simplicity,  is here taken to be
$\mathbb Z^d$  endowed with a hierarchical structure of partitions
into larger and larger communities. To generate the network each
vertex $v \in \mathbb Z^d$ is assigned
a random integer value $X_v$, where the $X_v$'s are i.i.d. random
variables. Each vertex $v$ identifies an individual, which belongs to all communities up to level
$X_v$ in the hierarchical structure. The basic random connectivity graph
is obtained by adding to the nearest neighbor edges of $\mathbb Z^d$ all the edges between
pairs of vertices belonging to at least one common community. For 
a wide class of distributions of the $X_v$'s the connectivity
graph is scale-free.

We then consider  Reed-Frost SIR epidemics on the connectivity network, in 
which infected individuals at time $t$ contact each neighbor independently with some transmission probability, and
if the neighbor is susceptible it becomes infected.
To complete the model, it is natural to consider  basic transmission probabilities for nearest neighbor vertices,
and then an additional probability, decreasing with the size of the 
community, of independent transmission for 
any community shared by two individuals. In this way,
the transmission probabilities do not depend only from the connectivity
graph, but directly from the shared classes, and give rise to a very
realistic mechanism. The set of individuals
ultimately affected by 
the Reed-Frost SIR epidemic is the set of vertices belonging to
a percolation graph with connection probabilities given by
the transmission probabilities (\cite{KuulasmaaZachary}); for natural choices
of the probabilities of infection through shared communities
the phase diagram of the percolation graph
exhibits a transition from nontrivial to trivial percolation threshold. \\
In summary, the model depends on five parameters: 
\begin{itemize}
\item $d$, indicating space dimension;
\item $z$, determining the growth factor $z^d$ of community sizes;
\item $\alpha \geq 1$, determining the distribution of the number of communities to which an individual belongs;
\item $p$, indicating the transmission probabilities to neighbors;
\item $\rho$, modulating the decrease in transmission probabilities for large communities.
\end{itemize}
Several random networks can be generated along the indicated lines. In particular,
the construction must specify the form of each partition and the interconnections
between partitions. To illustrate the mathematical properties of the
networks, we discuss in Section 2 a very simple and schematic structure, in which 
at each level $k$ the space is partitioned into hypercubes of linear size $z^k$,
which are then packed into hypercubes of linear size $z^{k+1}$ and so on. To keep
things simple one can 
think to $z=2$. For simplicity, we also limit ourselves to just one
 single parameter $\alpha$ to generate the 
connectivity graph, although this is excessively simplified, as the 
inclusion in small communities is likely to follow a different pattern from that
of inclusion in large communities.
In Section 2 we give a detailed description of the construction of the connectivity network.

In Section 3 we show that the degree distribution $D_v$ of any vertex $v$ in the connectivity network satisfies 
$$\lim_{h \rightarrow \infty} P(D_v \geq h) \ h^{- \gamma + 1} \in \bigg[\frac{1}{2 \alpha},
\frac{d^{d/2 +1} \omega_d}{d-\log_z \alpha}\bigg]$$
where $\omega_d$ is the volume of the $d$-dimensional unit ball and
$\gamma -1= \frac{\log_z \alpha}{d-\log_z \alpha}$, so that  
the network is scale-free for all  $\alpha \in (1,z^d)$; in particular, 
for 
$z^{\frac{d}{2}} \leq \alpha \leq z^{\frac{2d}{3}}$ the
network exhibits the typical value of $\gamma \in (2,3)$. 

In Section 4 we complete the description of the Reed-Frost
epidemic and begin the description of the phase diagram in the $\alpha - \rho$ 
variables; such description is completed in Section 5 
by dominating the probability of transmission in certain
sets by those in a long range percolation model extending  a recent result in \cite{MeesterTrapman}; it is remarkable that although we 
use  edge variables to bound a model based on site variables the 
result is still sharp and we identify the exact phase diagram.

\begin{enumerate}
\item For $ \alpha \geq z^d$ the network has short range behavior, and the hierarchical communities structure is irrelevant for the existence of 
critical threshold: there is a critical epidemic threshold $p_c$ for all $\rho$. 
\item For $ 1 \leq \alpha < z^d$ the behavior depends on $\rho$: if $\rho < \frac{\alpha}{z^d}$
there is still a nontrivial epidemic critical threshold $p_c \in (0,1)$ 
while $p_c=0$ for $\rho > \frac{\alpha}{z^d}$. This means that percolation, and thus an infinite outbreak, occurs at all values of $p$ in the parameter range we just identified. In the scale-free region, determined by $\alpha \in (1, z^{d})$, $p_c$ is thus trivial or not depending on the transmission rate in large communities.
It is trivial if the transmission rates are constant ($\rho=1$) or with a not too fast rate; on the other hand it is not trivial for $\rho$ below a critical curve in the phase diagram.
\item On the line $\alpha=1$ each vertex belongs to all communities:
the model is similar to long-range percolation (see \cite{NewmanSchulman}) and 
is studied in \cite{MeesterTrapman2}: $p_c=0$ or $p_c>0$ for the same parameter range 
as in long-range percolation.
\end{enumerate}
In a sense the proposed model interpolates between short ($\alpha \geq z^d$) and long ($\alpha = 1 $) range percolation.
A summary of the phase diagram is in figure 1.

\section{The connectivity graph} 
Consider a random graph $G_{\alpha, z} = (\mathbb Z^d, E_{\alpha, z})$ with $\mathbb Z^d$ as set of vertices and a random set of edges $\mathbb E_{\alpha,z}$ to be specified.
In the first place, $\mathbb E_1^d \subseteq E_{\alpha,z}$, where $\mathbb E_1^d$ is the set of nearest neighbor 
edges of $\mathbb Z^d$.  
Then, consider i.i.d. random variables $X_v$, $v \in \mathbb Z^d$, with a nonnegative
integer distribution $\mu_{\alpha,v}$ such that
$\mu_{\alpha,v}(X_v \geq k) = \alpha^{-k}$, $k=0, 1, \dots$,
where $\alpha > 1$ is a parameter. We let 
$\mu_{\alpha}=\prod_{v \in \mathbb Z^d} \mu_{\alpha}$ the joint product distribution 
of the $X_v$'s on the Borel $\sigma$-algebra in 
$X= \mathbb N^{\mathbb Z^d}$. By this choice there is only one
parameter determining the distribution of the number of communities to which
 individuals belong; the average number of communities
to which an individual belongs, a measure called group membership 
(see \cite{GlaeserLaibsonSacerdote} and \cite{Newman2}), is $\sum_k \alpha^{-d}=(\alpha-1)^{-1}$.
This is a realistic number especially for $\alpha \in [5/4,2)$.
 
\noindent Next, let $z \geq 2$ be an integer and for each $k$ partition $ \mathbb Z^d$ into blocks 
$$ B_{z,k}(i)= B_{z,k}(i_1, \dots, i_d)=\{v=(v_1, \dots, v_d) \in \mathbb Z^d: z^k i_j \leq v_j \leq (i_j+1)z^k -1,
\text{ for all } j=1, \dots, d \}.$$
Blocks represent a system of nested communities. Note that vertices separated by coordinate hyperplanes lie
always in different communities; the community structure is 
thus confined to orthants, and vertices in different
orthants are connected only through nearest
neighbor connections: this is not an unrealistic feature, however,
as it might represent very rigid borders or seas.
 
Given $\rho_{\alpha}$ and the $B_{z,k}(i)$'s, the random connectivity 
graph $G_{\alpha, z}$ is completed by including into the
edge set $E_{\alpha, z}$, next to the nearest neighbor edges,  also 
all pairs $\{u,v\}$ such that
$\exists k \in \mathbb N, i \in \mathbb Z^d \textrm{ with } X_u,X_v \geq k \textrm{ and } u,v \in B_{z,k}(i).$
In other words, given $\alpha$, $z$, $\mu_{\alpha}$ and $B_{z,k}(i)$'s, the random graph $G_{\alpha, z}$ is
defined by a map $\phi_z: X \rightarrow H=\{0,1\}^{\mathbb E^d} $,
where $\mathbb E^d= \{ \{u,v\}: u,v \in \mathbb Z^d\}$, with
$$
\phi_z(x)_{\{u,v\}}= 
\mathbb I_{ \{ \exists k \in \mathbb N, \ i \in  \mathbb Z^d | x_u,x_v \geq k \textrm{ and } u,v \in B_{z,k}(i) \}} \vee 
\mathbb I_{ \{ |u -v|=1 \} } 
$$
by $P_{\alpha, z}= \mu_{\alpha}(\phi_z^{-1})$.
Later we are interested not only in the connectivity graph but also
in the set of communities joining each pair of vertices: this leads
to further specify the map $\phi_z$, as done in section $4$ below,
but we first study the connectivity properties of $G_{\alpha,z}$.

For $v \in \mathbb Z^d$ and  $\eta \in H$, let
$D_v=D_v (\eta)=|\{u \in \mathbb Z^d : \{u,v\} \in \mathbb E_{\alpha,z}\}|$
be the degree of $v$.

\begin{lem} \label{teo:lowerbound}
For all $v \in \mathbb Z^d$ and $\alpha$, $z$ such that $\alpha < z^d$
\begin{equation}
\lim_{h \rightarrow \infty} \PP_{\alpha, z}(D_v \geq h) \cdot h^{ \gamma-1} \geq \frac{1}{2 \alpha},
\end{equation} 
where 
$$ \gamma-1=\frac{\log_z \alpha}{d- \log_z \alpha}$$
\end{lem}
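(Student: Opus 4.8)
The plan is to force $v$ to have many neighbours through a single, carefully chosen community that $v$ belongs to, and then to read the exponent of $h$ off the geometric growth $z^{kd}$ of community sizes. Throughout, write $a=\log_z\alpha$, so that $a\in(0,d)$ by the hypothesis $\alpha<z^d$, $\gamma-1=a/(d-a)>0$, and, since $z^d/\alpha=z^{d-a}$,
\[
\Big(\tfrac{z^d}{\alpha}\Big)^{-(\gamma-1)}=\big(z^{d-a}\big)^{-a/(d-a)}=z^{-a}=\alpha^{-1}.
\]

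The first step is a deterministic lower bound on $D_v$. Fix an integer $k\geq 1$ and condition on $\{X_v\geq k\}$. On this event $v$ lies in the level-$k$ block $B:=B_{z,k}(i(v))$ that contains it, and by the definition of $\mathbb E_{\alpha,z}$ every vertex $u\in B\setminus\{v\}$ with $X_u\geq k$ is joined to $v$. Hence, on $\{X_v\geq k\}$, $D_v\geq N_k:=\sum_{u\in B\setminus\{v\}}\mathbb I_{\{X_u\geq k\}}$. Because the $X_u$'s are i.i.d., $N_k$ depends only on $(X_u)_{u\neq v}$, so it is independent of $X_v$, and $|B|=z^{kd}$ gives $N_k\sim\mathrm{Bin}(z^{kd}-1,\alpha^{-k})$. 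Therefore, for every $k$,
\[
\PP_{\alpha,z}(D_v\geq h)\ \geq\ \PP_{\alpha,z}(X_v\geq k)\,\PP_{\alpha,z}(N_k\geq h)\ =\ \alpha^{-k}\,\PP_{\alpha,z}(N_k\geq h),
\]
using $\PP_{\alpha,z}(X_v\geq k)=\alpha^{-k}$.

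Now, given a large integer $h$, I would take $k=k(h)$ to be the smallest integer with $\EE[N_k]=(z^{kd}-1)\alpha^{-k}\geq h$; this is where $\alpha<z^d$ enters, since then $\EE[N_k]=(z^d/\alpha)^k\big(1-z^{-kd}\big)\to\infty$, so $k(h)$ is well defined and $k(h)\to\infty$. Two estimates close the argument. First, by the classical fact that a median of a binomial law lies in $\{\lfloor\EE N_k\rfloor,\lceil\EE N_k\rceil\}$, and since $h$ is an integer with $h\leq\EE[N_k]$, we get $\PP_{\alpha,z}(N_k\geq h)\geq\tfrac12$. Second, minimality of $k$ gives $(z^d/\alpha)^{k-1}-\alpha^{-(k-1)}=(z^{(k-1)d}-1)\alpha^{-(k-1)}<h$, hence $(z^d/\alpha)^{k-1}<h+1$ and so $(z^d/\alpha)^{k}<\tfrac{z^d}{\alpha}(h+1)$; since $x\mapsto x^{-(\gamma-1)}$ is decreasing and $\alpha^{-k}=\big((z^d/\alpha)^k\big)^{-(\gamma-1)}$, this yields $\alpha^{-k}>\alpha^{-1}(h+1)^{-(\gamma-1)}$ by the displayed identity above. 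Combining the three inequalities,
\[
\PP_{\alpha,z}(D_v\geq h)\,h^{\gamma-1}\ \geq\ \frac{1}{2\alpha}\Big(\frac{h}{h+1}\Big)^{\gamma-1}\ \longrightarrow\ \frac{1}{2\alpha}\qquad(h\to\infty),
\]
which is the claim; strictly this controls the $\liminf$, the existence of the limit being part of the sharper two-sided estimate behind the value of $\gamma$ discussed in Section 3.

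The computation is short and I do not expect a genuinely hard point. The step that needs the most care is $\PP_{\alpha,z}(N_k\geq h)\geq\tfrac12$: with the present choice of $k$ the mean $\EE[N_k]$ is only of the same order as $h$, not much larger, so one really does want the "median within $1$ of the mean" property of binomials rather than a crude Chebyshev bound; choosing instead the smallest $k$ with $\EE[N_k]\geq Ch$ would let Chebyshev give $\PP_{\alpha,z}(N_k\geq h)\to1$, but the overshoot in $k$ would then leave a prefactor $\approx\alpha^{-1}C^{-(\gamma-1)}$, which need not beat $1/(2\alpha)$ when $\gamma>2$ unless $C$ is taken close to $1$. The only other bookkeeping point is that the integer overshoot of $k(h)$ is by a factor at most $z^d/\alpha$, and it is exactly this factor that turns the ideal constant $1$ into the stated $1/\alpha$.
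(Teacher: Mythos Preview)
Your proof is correct and follows essentially the same line as the paper's: pick the level $k=k(h)$ so that the block $B_{z,k}\ni v$ has expected number of ``high'' vertices at least $h$, condition on $\{X_v\ge k\}$ (cost $\alpha^{-k}\sim \alpha^{-1}h^{-(\gamma-1)}$), and argue that the remaining binomial count reaches $h$ with probability at least $1/2$. The only difference is that you extract the factor $1/2$ from the exact median-of-binomial property rather than the CLT as the paper does; this is arguably cleaner, since it gives a non-asymptotic bound at each $h$ and sidesteps the need for a limiting argument.
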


\begin{proof} Given $v \in \mathbb Z^d$ and $h \in \mathbb N$
consider  the block $B_{z,l(h)}=B_{z,l(h)}(i)$ such that $v \in B_{z,l(h)}$
with $l(h) = \lfloor \frac{1}{d- \log_z \alpha}\log_z h +1 \rfloor$.
Then
\begin{eqnarray*}
\EE_{\mu_{\alpha}} \bigg( \sum_{u \in B_{z,l(h)}} \mathbb I_{ \{X_u \geq l(h)\} } \bigg) & = &
z^{l(h) d } \mu_{\alpha}(X_u \geq l(h))  \\
& = & \Big(\frac{z^d}{ \alpha}\Big)^{l(h)} \ \geq \ \Big(\frac{z^d}{ \alpha}\Big)^{\frac{\log_z \alpha}{d- \log_z \alpha}} \ = \ h.
\end{eqnarray*}
By the CLT, $\lim_h \mu_{\alpha} (  \sum_{u \in B_{z,l(h)}} \mathbb I_{\{X_u \geq l(h)\}} \geq h ) \geq 1/2$.
Hence,
\begin{eqnarray*} 
h^{ \gamma -1} \ \PP_{ \alpha, z} (D_v \geq h) & \geq & h^{ \gamma -1} \ \mu_{\alpha}(X_v \geq l(h)) \
\PP_{ \alpha, z} \bigg( \sum_{u \in B_{z,l(h)}} \mathbb I_{ \{ \{v,u\} \in E_{\alpha, d} \}} \geq h \ | \
 X_v \geq l(h) \bigg) \\
& \geq & h^{\gamma -1} \ \alpha^{-l(h)} \ 
\mu_{\alpha} \bigg( \sum_{u \in B_{z,l(h)}} \mathbb I_{ \{X_u \geq l(h)\}} \geq h \bigg) 
\rightarrow \frac{1}{2 \alpha}
\end{eqnarray*}
\end{proof}

To get the corresponding upper bound on the degree distribution we compare 
the connectivity graph to Yukich's network, which has vertex heights
based on 
uniform distributions and connections related to distance. As first step, we
compare the connectivity graph  to a network based only on distances
but retaining the distribution of the $Z_v$'s for the vertex heights: for 
$\delta >0$
 consider $G'_{\alpha, z, \delta} = (\mathbb Z^d, E'_{\alpha, z, \delta})$
such that 
\begin{equation}
(u,v) \in E'_{\alpha, z, \delta} \Leftrightarrow \exists k \textrm{ s.t. } X_u,X_v \geq k \textrm{ and } d(u,v) \leq \delta z^k;
\end{equation}
more precisely, let $\phi_{z,\delta}' :X \rightarrow H$ such that
\begin{equation}
\phi_{z,\delta}'(x)_{\{u,v\}} =  \mathbb I_{ \{ \exists k \in \mathbb N \ | \ X_u,X_v \geq k \textrm{ and } d(u,v) \leq \delta z^k \}}(x)
\end{equation}
and let $P_{\alpha, z, \delta}'= \mu_{\alpha}((\phi'_{z,\delta})^{-1})$.
Note that for $\delta = \sqrt d$ and for every increasing $A \subset H$
\begin{equation}
\PP_{z,\alpha}(A) \leq \PP_{\alpha,z,\sqrt d}'(A).
\end{equation}
In fact, taking $k=\min(X_u,X_v)$, if $d(u,v) \geq \sqrt{d} \ z^k$ then $\{u,v\} \notin E_{\alpha, z}$. Therefore, if $\{u,v\} \in E_{\alpha,z}$ then $z^k \geq d(u,v) / \sqrt{d}$, so that $\{u,v\} \in E_{\alpha, z, \sqrt{d}}'$, i.e.
$\phi_{z}(x) \leq \phi'_{z, \sqrt{d}}(x)$. This implies that if $A$ is increasing, $x \in X$ and $\phi_z(x) \in A$
then also $\phi'_{z, \sqrt{d}}(x) \in A$, i.e.
$(\phi_{z})^{-1}(A) \subseteq \Big(\phi'_{z, \sqrt{d}}\Big)^{-1}(A).$

\noindent Note also that 
\begin{equation}
\phi_{z,\delta}'(x)_{\{u,v\}} = \mathbb I_{ \{z^{X_u},z^{X_v} \geq \frac{d(u,v)}{\delta}  \}}(x).
\end{equation}

\section{Comparison with Yukich network}
Let $\{U_v \}_{v \in \mathbb Z^d}$ be i.i.d. uniform $[0,1]$ random variables
with distribution $P_U$ on $[0,1]^{\mathbb Z^d}$ and consider  Yukich network $\bar G_{s,\delta}=(\mathbb Z^d,\bar E_{s,\delta})$ defined for $s,\delta>0$ by
\begin{equation}
\{u,v\} \in \bar E_{s,\delta} \ \Leftrightarrow \ d(u,v) \leq \delta \min(U_u^{-s},U_v^{-s})
\end{equation}
As before one can take 
$W=[0,1]^{\mathbb Z^d}$, define $\bar \phi'_{s,\delta}:W \rightarrow H$ such that
\begin{equation}
\bar \phi'_{s,\delta}(w)_{\{u,v\}} = \mathbb I_{ \{w_u,w_v \leq \frac{d(u,v)}{\delta}^{-1/s}  \}}
\end{equation}
and let $\mathbb {\bar P}'_{s,\delta}=P_U((\bar \phi'_{s,\delta})^{-1})$.
We need to slightly reformulate Theorem 1.1 in Yukich (\cite{Yukich})  to incorporate the constant $\delta$.
\begin{pro} \label{teo:yukich}
For all $d, \delta$ and $s \in (\frac{1}{d},\infty)$
\begin{equation}
\lim_{t \rightarrow \infty} t^{\frac{1}{sd-1}} \ \bar \PP'_{s,\delta}(D_s(v) \geq t) = \bigg( \frac{\delta^d s d \omega_d}{sd-1} \bigg)^{\frac{1}{sd-1}}
\end{equation}
for all $v \in \mathbb Z^d$, where $\omega_d$ denotes the volume of the unit ball in $\mathbb R^d$.
\end{pro}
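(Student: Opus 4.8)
The plan is to adapt Yukich's proof of his Theorem~1.1 to the present parametrisation, carrying the scale factor $\delta$ through the estimates; the only structural point is that a ball of radius $\delta r$ in $\mathbb R^d$ has volume $\delta^d\omega_d r^d$, which is the source of the extra factor $\delta^d$ in the limiting constant. First I would condition on $U_v=w$. From the representation $\bar\phi'_{s,\delta}(w)_{\{u,v\}}=\mathbb I_{\{w_u,w_v\le (d(u,v)/\delta)^{-1/s}\}}$, the requirement $w=w_v\le (d(u,v)/\delta)^{-1/s}$ is equivalent to $d(u,v)\le\delta w^{-s}$, so, given $U_v=w$, the edge $\{u,v\}$ is present precisely when $d(u,v)\le\delta w^{-s}$ and $U_u\le (d(u,v)/\delta)^{-1/s}$. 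Since the $U_u$, $u\neq v$, are i.i.d.\ and independent of $U_v$, under $\bar\PP'_{s,\delta}(\,\cdot\mid U_v=w)$ the degree $D_s(v)$ is a sum of independent indicators, the one attached to $u$ having success probability $q_u(w):=\min\bigl(1,(d(u,v)/\delta)^{-1/s}\bigr)$ if $d(u,v)\le\delta w^{-s}$ and $0$ otherwise; in particular $D_s(v)$ is a.s.\ finite and vanishes once $w>\delta^{1/s}$.

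Next I would estimate the conditional mean $m(w):=\EE[D_s(v)\mid U_v=w]=\sum_{u}q_u(w)$. Splitting the sum according to whether $d(u,v)\le\delta$ or not, the first part is $O(\delta^d)$ and the second is, by a routine comparison of the sum with the corresponding integral, asymptotically $\int_{\delta<|x|\le\delta w^{-s}}(|x|/\delta)^{-1/s}\,dx$. In polar coordinates this equals $d\omega_d\delta^{1/s}\int_{\delta}^{\delta w^{-s}}r^{d-1-1/s}\,dr$, and the hypothesis $s>1/d$ makes the exponent $d-1-1/s$ strictly larger than $-1$, so the integral is dominated by its upper endpoint and equals $\frac{d\omega_d\delta^{1/s}}{d-1/s}(\delta w^{-s})^{d-1/s}(1+o(1))=\frac{sd\omega_d\delta^d}{sd-1}\,w^{1-sd}(1+o(1))$ as $w\to0$. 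Hence $m(w)\sim c\,w^{1-sd}$ with $c:=\frac{sd\omega_d\delta^d}{sd-1}$; in particular $m(w)\to\infty$ and $w\mapsto m(w)$ is non-increasing.

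Finally I would combine concentration with integration over $w$. Being a sum of independent indicators, $D_s(v)$ obeys the standard Chernoff bounds in both directions around $m(w)$; set $w^\ast(t):=(c/t)^{1/(sd-1)}$, the asymptotic solution of $m(w)=t$. For fixed $\lambda<1$ and $w\le\lambda w^\ast(t)$ one has $m(w)\ge(1+\kappa)t$ for some $\kappa>0$ and all large $t$, so (Chebyshev already suffices here) $\bar\PP'_{s,\delta}(D_s(v)\ge t\mid U_v=w)\to1$ uniformly in such $w$; for fixed $\lambda>1$ and $w\ge\lambda w^\ast(t)$ one has $m(w)\le(1-\kappa')t$, and the Chernoff bound gives $\bar\PP'_{s,\delta}(D_s(v)\ge t\mid U_v=w)\le e^{-c_\lambda t}$ uniformly. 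Since $U_v$ is uniform on $[0,1]$, $\bar\PP'_{s,\delta}(D_s(v)\ge t)=\int_0^1\bar\PP'_{s,\delta}(D_s(v)\ge t\mid U_v=w)\,dw$ is then sandwiched between $(1-o(1))\lambda w^\ast(t)$ and $(1+o(1))\lambda'w^\ast(t)+e^{-c t}$ for any $\lambda<1<\lambda'$; letting $\lambda\uparrow1$ and $\lambda'\downarrow1$ after $t\to\infty$ and recalling that $w^\ast(t)$ decays only polynomially, we get $\bar\PP'_{s,\delta}(D_s(v)\ge t)=w^\ast(t)(1+o(1))$. Substituting $w^\ast(t)=t^{-1/(sd-1)}\bigl(\tfrac{\delta^d sd\omega_d}{sd-1}\bigr)^{1/(sd-1)}$ yields the claim.

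The step I expect to be the main obstacle is precisely the uniformity in $t$ of this sandwich: one must check that the conditional tail estimates hold uniformly over the relevant ranges of $w$, and in particular that for $w$ just above $w^\ast(t)$ the right tail of $D_s(v)\mid U_v=w$ is small enough. Here a mere second-moment bound is not always enough, since for $sd$ close to $1$ the quantity $w^\ast(t)\sim t^{-1/(sd-1)}$ is smaller than any fixed power of $t^{-1}$, so one genuinely needs the exponential (Chernoff) control of the upper deviations. All of this parallels Yukich's argument, so beyond carrying $\delta$ through the estimates, no new idea is required.
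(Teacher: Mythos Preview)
Your proposal is correct and follows essentially the same route as the paper: both condition on $U_v$, identify the conditional mean of the degree as $\frac{\delta^d sd\omega_d}{sd-1}\,w^{1-sd}(1+o(1))$, and then invoke the concentration/sandwich machinery of Yukich's original proof. The only difference is one of presentation: the paper simply recomputes the conditional mean with $\delta$ in place (this is exactly your integral $d\omega_d\delta^{1/s}\int_0^{\delta w^{-s}}r^{d-1-1/s}\,dr$) and then declares that the remainder of Yukich's argument carries over verbatim with the new constant, whereas you spell out the Chernoff-based sandwich yourself.
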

\begin{proof}
Yukich proves the same result for $\delta=1$. The conclusion is achieved by taking the origin $v=0$, conditioning on $U_0=\tau$ and using translation invariance. The basis of Yukich proof is Lemma 2.1 in \cite{Yukich}, which states that
$$\EE(D_s(0)|U_0 =\tau) \approx \int_{|x|\leq \tau^{-s}} |x|^{-\frac{1}{s}} \ dx = d \omega_d \int_0^{\tau^{-s}} t^{d-1-\frac{1}{s}} \ dt = \frac{sd\omega_d}{sd-1} \ \tau^{-(sd-1)}$$
When a generic $\delta$ is considered we get
\begin{eqnarray*}
\EE(D_s(0)|U_0 =\tau) & \approx & \int_{|x|\leq \delta \tau^{-s}} |x|^{-\frac{1}{s}} \delta^{\frac{1}{s}} \ dx \\
& = & d \omega_d \delta^{\frac{1}{s}} \int_0^{\delta \tau^{-s}} t^{d-1-\frac{1}{s}} \ dt = \frac{\delta^d sd \omega_d}{sd-1} \ \tau^{-(sd-1)}
\end{eqnarray*}
The rest of the proof in \cite{Yukich}
is still valid with the constant $\beta=\frac{sd \omega_d}{sd-1}$ replaced by $\frac{\delta^d sd \omega_d}{sd-1}$
\end{proof}
\noindent Then we can deduce the following upper bound for the power law distribution of the network $G_{\alpha,z}$.

\begin{teo}\label{teo:upperbound}
For all $z$ and $\alpha \in (1,z^d)$
\begin{equation}\label{eq:upperbound}
\lim_{h \rightarrow \infty} \PP_{\alpha,z}(D(v) \geq h) \ h^{\gamma-1} \leq 
\bigg( \frac{d^{\frac{d}{2}+1} \omega_d}{d-\log_z \alpha} \bigg)^{\gamma-1}
\end{equation}
where $\gamma-1 = \frac{\log_z \alpha}{d-\log_z \alpha}$
\end{teo}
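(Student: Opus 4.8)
The plan is to push the increasing event $\{D(v)\ge h\}$ through the two comparisons already prepared in Sections~2 and~3, and then read off the tail from Proposition~\ref{teo:yukich}. First, since $\phi_z(x)\le\phi'_{z,\sqrt d}(x)$ pointwise, one gets at once $\PP_{\alpha,z}(D(v)\ge h)\le\PP'_{\alpha,z,\sqrt d}(D(v)\ge h)$ for every $h$, so it suffices to bound the degree tail of $G'_{\alpha,z,\sqrt d}$ by that of a Yukich network with a well-chosen exponent.

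The heart of the matter is the choice $s=\tfrac1{\log_z\alpha}$ together with a coupling of the vertex heights to the uniforms. Note $s\in(\tfrac1d,\infty)$ precisely because $\alpha\in(1,z^d)$, so Proposition~\ref{teo:yukich} applies. Realize the $X_v$'s from the $U_v$'s, independently at each vertex, by $X_v=\lfloor-\log_\alpha U_v\rfloor$: this has the law $\mu_\alpha$, since $\PP(X_v\ge k)=\PP(U_v\le\alpha^{-k})=\alpha^{-k}$, and, because $z>1$ and $X_v\le-\log_\alpha U_v$, it satisfies $z^{X_v}\le z^{-\log_\alpha U_v}=U_v^{-1/\log_z\alpha}=U_v^{-s}$ pointwise. (In tail form, $\PP(z^{X_v}\ge t)=\alpha^{-\lceil\log_z t\rceil}\le t^{-\log_z\alpha}=\PP(U_v^{-s}\ge t)$, so the discretization works in the direction we need and one could instead quote Strassen.)

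Under this coupling $E'_{\alpha,z,\sqrt d}\subseteq\bar E_{s,\sqrt d}$: by the reformulation $\phi'_{z,\sqrt d}(x)_{\{u,v\}}=\mathbb I_{\{z^{X_u},z^{X_v}\ge d(u,v)/\sqrt d\}}$, an edge of $G'_{\alpha,z,\sqrt d}$ forces $U_u^{-s}\ge z^{X_u}\ge d(u,v)/\sqrt d$ and likewise at $v$, hence $d(u,v)\le\sqrt d\,\min(U_u^{-s},U_v^{-s})$, i.e.\ $\{u,v\}\in\bar E_{s,\sqrt d}$. Thus $D(v)\le D_s(v)$ pointwise, giving $\PP'_{\alpha,z,\sqrt d}(D(v)\ge h)\le\bar\PP'_{s,\sqrt d}(D_s(v)\ge h)$. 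Chaining the three inequalities and applying Proposition~\ref{teo:yukich} with $\delta=\sqrt d$ and $s=\tfrac1{\log_z\alpha}$ finishes it: $sd-1=\tfrac{d-\log_z\alpha}{\log_z\alpha}$ so $\tfrac1{sd-1}=\gamma-1$, while $\tfrac{(\sqrt d)^d\,sd\,\omega_d}{sd-1}=\tfrac{d^{d/2+1}\omega_d}{d-\log_z\alpha}$, and raising this to the power $\gamma-1$ reproduces the right-hand side of~\eqref{eq:upperbound}. Because only inequalities enter, one actually obtains the bound for $\limsup_h h^{\gamma-1}\PP_{\alpha,z}(D(v)\ge h)$, which is the content intended by the $\lim$ in the statement.

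I do not expect a serious obstacle: the real work is recognizing that $s=\tfrac1{\log_z\alpha}$ is the exponent matching the two power-law tails, checking that the floor/ceiling in comparing $z^{X_v}$ with $U_v^{-s}$ only adds edges, and tracking the lattice factor $\sqrt d$ so that the Yukich constant $\tfrac{(\sqrt d)^d sd\,\omega_d}{sd-1}$ collapses, after exponentiation, to the claimed $\big(\tfrac{d^{d/2+1}\omega_d}{d-\log_z\alpha}\big)^{\gamma-1}$. The one point that needs a word of care is that the coupling must hold simultaneously over all $v\in\mathbb Z^d$, which is exactly why I would use the explicit quantile construction $X_v=\lfloor-\log_\alpha U_v\rfloor$ rather than an abstract domination argument.
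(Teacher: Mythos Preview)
Your proposal is correct and follows essentially the same route as the paper: dominate $G_{\alpha,z}$ by $G'_{\alpha,z,\sqrt d}$, couple the heights to the uniforms with $s=1/\log_z\alpha$ so that $z^{X_v}\le U_v^{-s}$, deduce edge inclusion into $\bar G_{s,\sqrt d}$, and read off the tail from Proposition~\ref{teo:yukich}. Your quantile coupling $X_v=\lfloor-\log_\alpha U_v\rfloor$ is in fact identical to the paper's measure $\nu_v$ (since $z^{-k/s}=\alpha^{-k}$, the paper's partition $(z^{-(k+1)/s},z^{-k/s}]$ is just $(\alpha^{-(k+1)},\alpha^{-k}]$), only more compactly written; your explicit check of the constants and the remark that only $\limsup$ is actually proved are welcome clarifications.
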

\begin{proof}
First recall that 
\begin{equation}\label{eq:dominationP2}
\PP_{z,\alpha}(D \geq h) \leq \PP_{z,\alpha,\sqrt{d}}'(D \geq h)
\end{equation}
since $\{D \geq h\}$ is increasing.
We want to compare $G_{\alpha,z,\delta}'$ to the Yukich's network $\bar G'_{s,\delta}$. 
For $m=z^k$
$$
\mu_{\alpha}(z^{X_v} \geq m)= \mu_{\alpha}(X_v \geq	k) = \alpha^{-k}
= \alpha^{-\log_z m}=m^{-\log_z \alpha}.
$$
On the other hand $P_U(U_v^{-s} \geq m)=P_U(U_v \leq m^{-1/s}) = m ^{-1/s}$
so that taking $\log_z \alpha= 1/s$ we have
$$
P_U(U_v^{-s} \geq m)=\mu_{\alpha}(z^{X_v} \geq m) \quad \text{ for } m=z^k
$$
and
$$
P_U(U_v^{-s} \geq m) = \alpha^{-\log_z m}
\geq \alpha^{\lceil  -\log_z m \rceil }= 
\mu_{\alpha}(X_v \geq \lceil  \log_z m \rceil)
=\mu_{\alpha}(X_v \geq   \log_z m )
=\mu_{\alpha}(z^{X_v} \geq	m)
$$
for all other $m \in \mathbb R$.
Therefore, $U_v$ and $z^{X_v}$ can be coupled by the following joint
distribution. Let $P_{U_v}$ be the distribution of
$U_v$ and $\nu_v$ be a probability on the $\sigma$-algebra in
$[0,1] \times \mathbb N$ such that for $A \subseteq [0,1]$ and 
$k \in \mathbb N$ it holds
$$
\nu_v(A, k)= P_{U_v}\Big(A \cap (z^{-(k+1)/s},z^{-k/s}]\Big).
$$
We have
\begin{itemize}
\item $\nu_v(A, \mathbb N)= P_{U_v}(A)$;
\item $\nu_v([0,1], k)= P_{U_v}((z^{-(k+1)/s},z^{-(k)/s}])
=\alpha^{-k} - \alpha^{-(k+1)}= \mu_{\alpha}(X_v=k);$
\item $\nu_v\{(u,k): u^{-s} \geq z^k\}
=\nu_v\{(u,k): u \leq z^{-k/s}\}=1$.
\end{itemize}
The product distributions $\mu_{\alpha}$ and 
$P_U$ can be coupled by the product distribution
$\nu= \prod_{v \in \mathbb Z^d} \nu_v$, under which 
$U_v^{-s} \geq z^{X_v}$ for all $v \in \mathbb Z^d$ with probability one. \\
If $w \in W$ and $x \in \mathbb N^{\mathbb Z^d}$ are such that
$w_v^{-s} \geq z^{x_v}$ for all $v$ then
\begin{eqnarray*}
\bar \phi'_{s,\delta}(w)_{ \{u,v\}} & = & \mathbb I_{ \{ w_u^{-1},w_v^{-1} \geq \frac{d(u,v)}{\delta} \} } \\
&\geq& \mathbb I_{ \{ z^{x_u},z^{x_v}\geq \frac{d(u,v)}{\delta} \} } = \phi'_{z,\delta}(x)_{ \{u,v\}}. 
\end{eqnarray*}
Thus,  if $A \subseteq H$ is increasing,  
$x \in (\phi'_{z,\delta})^{-1}(A)$ and $w_v^{-s} \geq z^{x_v}$
then $w \in (\bar \phi'_{s,\delta})^{-1}(A)$.
Hence, for $A$ increasing
\begin{eqnarray*}
P'_{\alpha, z, \delta}(A) &=& \mu_{\alpha} \Big( (\phi'_{z,\delta})^{-1}(A) \Big) \\
&=& \nu \Big( W,(\phi'_{z,\delta})^{-1}(A) \Big) \\
&=& \nu \Big( (\bar \phi'_{(\log_z \alpha)^{-1},\delta})^{-1}(A),(\phi'_{z,\delta})^{-1}(A) \Big)  \\
& \leq & \nu \Big( (\bar \phi'_{(\log_z \alpha)^{-1},\delta})^{-1}(A),\mathbb N^{\mathbb Z^d} \Big) 
= P_U \Big( (\bar \phi'_{(\log_z \alpha)^{-1},\delta})^{-1}(A) \Big)
= \bar P'_{(\log_z \alpha )^{-1},\delta}(A).
\end{eqnarray*}
Since $A= \{D \geq h\}$ is increasing
$$
P_{z,\alpha}(D \geq h) \leq P'_{\alpha, z, \sqrt{d}}(D \geq h)
\leq \bar P'_{(\log_z \alpha)^{-1},\sqrt{d}}(D \geq h).
$$
If we take $s=(\log_z \alpha)^{-1}$ and $\alpha \in (1, z^d)$ then 
$s \in (1/d, \infty)$ 
and the result follows from Proposition 3.1 with $\delta=\sqrt{d}$. 
\end{proof}

From lemma \ref{teo:lowerbound} and theorem \ref{teo:upperbound}, for large $h$ it
holds
$\PP_{z,\alpha}(D = h) \approx h^{-\gamma}$
where $\gamma-1 = \frac{\log_z \alpha}{d-\log_z \alpha}$. \\
Thus the hierarchical model is scale free 
for each $\alpha \in [1, z^d)$. Typically,
in the scale free region $-3 \leq -\gamma \leq -2$,
which is then equivalent to
$z^{\frac{d}{2}} \leq \alpha \leq z^{\frac{2d}{3}}$.

We end this section by commenting on the relation between the 
scale free region and the average number of communities to which
an individual belongs. As we have seen, there is a realistic 
average number of
communities for $\alpha \in [5/4,2]$, which has no
intersection with the typical scale-free region
even for  $d=2$ and $z=2$. It is, however, quite simple
to realign the parameter ranges  by introducing some more parameters
more realistically describing small group membership. This is reminiscent of
long range percolation in dimension 1, in which the
probability of nearest neighbor connection can, by
itself, determine phase transition for a critical value of 
the main parameter (\cite{AizenmanNewman}). We do not pursue this direction here.

\section{Epidemics}
We consider a Reed-Frost dynamics to describe the spread of an infection
on the connectivity network (see, for instance, \cite{BrittonDeijfenAndreasLagerasLindholm}, section 3, for 
a detailed description). In such dynamics, at discrete times each
infected individual contacts each one of its neighbors with
some probability, and if the neighbor is susceptible it becomes infected;
in the meantime the infected recovers.
Differently from usual, we assume, however,
 that the probability of infectious contact depends on the 
communities shared by the two neighbors: in particular, 
we assume that there is a probability of independent
transmission for each community shared by two individuals,
and we are interested in the set of individual eventually affected by the epidemics started
from one single vertex, the origin for instance. Such
set can be identified with the cluster $V_0^{(d)}$
containing the origin in an edge
percolation process on $G_{\alpha,z}$ 
described by the following probability measure:
for each value $x \in \mathbb R^d$
of the $X_v$'s, consider a (conditional) Bernoulli probability distribution
${\mathbb P}_{x, z, \rho, p}$ on
$\{0,1\}^{E_{x, z}}$ such that 
\begin{equation}\label{defK}
{\mathbb P}_{x,  z, \rho, p}(\eta_{\{u,v\}}=1)=1-\prod_{k=0}^{\infty} ( 1-p \rho^k \ \mathbb I_{\{\{u,v\}| \ \exists i \in \mathbb Z^d: \ x_u,x_v \geq k \textrm{ and } u,v \in B_{k,z}(i) \}}).
\end{equation}
Our main interest here is
in studying for which values of the parameters there is a finite
or an infinite set of infected individuals, or, equivalently,
a finite or infinite cluster, i.e. we are interested in 
the probability $\mathbb P_{x, z, \rho, p}
(| V^{(d)}_{0}|= \infty)$.
The joint probability distribution which describes percolation
and epidemics is defined on the Borel $\sigma$-algebra in $H$
by
$\mathbb P_{\alpha, z, \rho, p}=
\int_X \mathbb P_{x, z, \rho, p} \ \mu_{\alpha}(dx).
$

For a given $x$   let $G_{x,z}$ be the realization of the connectivity
graph with value $x$ of the $X_v$ variables. 
Since $G_{x,z}$ contains all the nearest neighbor edges and they are open with probability at least $p$, 
 if $p > \pi_c^{(d)}$, the critical point for  d-dimensional bond percolation, then
percolation occurs regardless of the value of the other parameters and of 
the realization $x$. Notice that $\pi_c^{(d)} <1 $ by Peierls argument, and,
more precisely, $\pi_c^{(2)}=1/2$ (\cite{Kesten1}) and 
$\pi_c^{(d)} \sim 1/2d$ (\cite{Kesten2}).
Moreover, for any fixed $x$ and $\rho$, the probability in \eqref{defK}
is increasing in $p$, and the random variables $\eta_{\{u,v\}}$ are
independent. Thus, it follows by a standard FKG inequality (see, e.g., 
\cite{Grimmett}) that for any $p \geq p'$ and any increasing event 
$A \subseteq \{0,1\}^{E_{x,z}}$ we have
${\mathbb P}_{x,  z, \rho, p}(A) \geq {\mathbb P}_{x,z, \rho, p'}(A)$.
Since $A^{(x)}_{0, \infty}=\{\eta: V^{(d)}_{0}|= \infty \}$ is increasing
it follows that there exists a critical $p_c(x, \rho) < 1$
for the onset of an infinite percolation cluster.

It could happen that $p_c(x, \rho)=0$. If 
$\alpha=1$ then we are assuming  $X_v=\infty$ for all $v$, 
and the percolation model is quite close to long range percolation
(\cite{NewmanSchulman}) in which the critical threshold
$\pi_c$ has a transition at some value of a parameter
which corresponds to $\rho$: for small values of $\rho$ we have $\pi_c>0$ and
for large $\rho$ it is instead $\pi_c=0$. After showing 
that, in fact, the critical threshold $p_c(x, \rho)=0$
is almost surely constant in $x$, we see that a similar 
transition occurs in the hierarchical model for all
values of $\alpha \in [1, z^d]$. To this purpose
we introduce a more detailed  description of the model: consider $\Sigma=\mathbb N^{\mathbb Z^d} \times
\{0,1\}^{ \mathbb E^d \times \mathbb N } $ and parameters $\alpha, \rho$ and $p$.
Then take a Bernoulli
 probability distribution 
$\tilde {\mathbb P}_{\alpha,  \rho, p}$
on the Borel $\sigma$-algebra $\mathcal A$ in $\Sigma$ such that
\begin{itemize}
\item $\tilde P_{\alpha, \rho,p} (\sigma_v \geq k) = \alpha^{-k}
\text{ for all } v \in \mathbb Z^d$;
\item $\tilde P_{\alpha, \rho,p} (\sigma_{\{u,v\},k} =1) = p \rho^k
\text{ for all } \{u,v\} \in \mathbb E^d \text{ and } k \in \mathbb N$
\end{itemize}
One then retrieves the probability $P_{\alpha, z, \rho,p}$ by 
considering the map $\psi_z: \Sigma \rightarrow H$
such that 
$$
\psi_z = \mathbb I_{ \{ \exists k \in \mathbb N, \ i \in \mathbb Z^d: \ u,v \in B_{z,k}(i); \ \sigma_u, \sigma_v \geq k; \ \sigma_{(\{u,v\}, k)}=1 \}}
$$
and observing that 
$P_{\alpha, z, \rho,p}=\tilde { P}_{\alpha,  \rho, p}
(\psi_z^{-1})$.
Notice that while $\tilde P_{\alpha,  \rho,p}$
is Bernoulli, the distribution 
$P_{\alpha, z, \rho,p}$ on $H$ is not independent since,
for instance, if $u,u' \in B_{z,k}(0) \setminus 
B_{z,k-1}(0)$ then 
$P_{\alpha, z, \rho,p}(\eta_{\{0,u'\}}=1|\eta_{\{0,u\}}=1)
= \alpha^{-k} \neq \alpha^{-2k} =
P_{\alpha, z, \rho,p}(\eta_{\{0,u'\}}=1)$.
$P_{\alpha, z, \rho,p}$ is actually one-dependent. xxxx

\begin{lem} 
$p_c(x, \rho) $ is almost everywhere constant in $x$
\end{lem}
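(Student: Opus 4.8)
The plan is to use a standard ergodicity argument based on the translation invariance of the model. The critical value $p_c(x,\rho)$ is a function of the configuration $x = (X_v)_{v \in \mathbb Z^d}$, and the joint distribution $\mu_\alpha = \prod_v \mu_{\alpha,v}$ is a product measure, hence invariant and ergodic under the group of lattice translations $\{T_w : w \in \mathbb Z^d\}$ acting on $X = \mathbb N^{\mathbb Z^d}$. So it suffices to show that $p_c(\cdot,\rho)$ is a translation-invariant measurable function on $X$; then by ergodicity it is $\mu_\alpha$-a.e. constant.

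First I would verify translation invariance. Fix $w \in \mathbb Z^d$. The construction of the connectivity graph $G_{x,z}$ and the percolation measure $\mathbb P_{x,z,\rho,p}$ depend on $x$ only through the block partitions $B_{z,k}(i)$ and the heights $x_v$; while the partition $\{B_{z,k}(i)\}$ is not literally translation invariant (it is only invariant under translations by multiples of $z^k$), the event of interest, existence of an infinite cluster, is a tail-type event whose probability is unchanged by any shift. More precisely, I would argue that $p_c(T_w x, \rho) = p_c(x,\rho)$ by noting that an infinite open cluster somewhere forces, after finitely many nearest-neighbor steps (which are always present in $E_{x,z}$ and open with probability $\geq p > 0$ whenever $p>0$), an infinite open cluster through any prescribed finite region; combined with the fact that $\{|V_0^{(d)}| = \infty\}$ having positive probability is equivalent to $\mathbb P_{x,z,\rho,p}$-a.s. existence of an infinite cluster (by the Kolmogorov-type zero-one law for the one-dependent field $\eta$, or directly since $\eta$ is a deterministic function of the Bernoulli field $\tilde\sigma$ on $\Sigma$ whose tail is trivial), one gets that $\{x : p_c(x,\rho) \leq p\}$ is a translation-invariant event for each rational $p$. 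Measurability of $p_c(\cdot,\rho)$ follows because $\{p_c(x,\rho) < p\} = \{x : \mathbb P_{x,z,\rho,p}(|V_0^{(d)}|=\infty) > 0\}$ is measurable in $x$ (the map $x \mapsto \mathbb P_{x,z,\rho,p}(\,\cdot\,)$ is measurable since the percolation probabilities in \eqref{defK} depend measurably, indeed continuously, on $x$).

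Then I would conclude: for each rational $p$, the event $\{x : p_c(x,\rho) \leq p\}$ is translation invariant, hence has $\mu_\alpha$-measure $0$ or $1$ by ergodicity of the product measure; letting $p$ range over the rationals and taking $p_c(x,\rho) = \sup\{p \in \mathbb Q : \mu_\alpha\text{-a.e. } x' \text{ has } p_c(x',\rho) > p \text{ fails}\}$ pins down a single value $\bar p_c(\rho)$ with $p_c(x,\rho) = \bar p_c(\rho)$ for $\mu_\alpha$-a.e. $x$.

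The main obstacle is the bookkeeping around the fact that the block partitions $B_{z,k}(i)$ break exact translation invariance at every scale: one must be careful that a shift by $w$ does not genuinely change the geometry of the graph in a way that could affect percolation. The clean way around this is to pass to the richer space $\Sigma$ and the measure $\tilde{\mathbb P}_{\alpha,\rho,p}$ introduced just above the statement, which \emph{is} a genuine product (Bernoulli) measure on $\mathbb N^{\mathbb Z^d} \times \{0,1\}^{\mathbb E^d \times \mathbb N}$ and hence ergodic under $\mathbb Z^d$-translations; the percolation cluster is a measurable function of $\tilde\sigma$ via $\psi_z$, so the event $\{|V_0^{(d)}|=\infty\}$ pulled back to $\Sigma$ lies in the invariant $\sigma$-algebra modulo the aforementioned nearest-neighbor absorption argument, and one reads off the a.e.-constancy of $p_c$ directly. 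I would present the argument in this second form to keep it rigorous.
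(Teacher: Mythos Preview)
Your proposed route via ergodicity has a genuine gap that you identify but do not actually close. The hierarchical block partitions $B_{z,k}(i)$ are not invariant under \emph{any} nonzero lattice shift: for every $w\neq 0$ there are levels $k$ with $w\notin z^k\mathbb Z^d$, so the map $\psi_z$ fails to intertwine translations. Concretely, two vertices $u,v$ with $x_u,x_v\ge k$ lying in the same $k$-block may be sent by $T_w$ to vertices in different $k$-blocks; the edge is lost and cannot be recovered by the nearest-neighbour mechanism. Thus neither $x\mapsto p_c(x,\rho)$ on $X$ nor $\psi_z^{-1}(A_\infty)$ on $\Sigma$ is a translation-invariant object, and your ``nearest-neighbour absorption'' remark does not repair this: having positive probability of reaching a given finite region from an infinite cluster says nothing about whether the \emph{event} $A_\infty$ is shift-invariant as a set in $\Sigma$. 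Passing to the product space $\Sigma$ changes nothing here, because the obstruction lives in $\psi_z$, not in the measure.

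What does work is the observation you bury in a parenthesis: on $\Sigma$ the measure $\tilde{\mathbb P}_{\alpha,\rho,p}$ is a genuine product of independent coordinates, so the Kolmogorov zero--one law applies to its tail $\sigma$-field. This is precisely the paper's argument: it shows that $\psi_z^{-1}(A_\infty)$ lies in the tail $\sigma$-algebra of $\tilde{\mathbb P}_{\alpha,\rho,p}$ (altering the $\sigma$-variables attached to a finite box modifies only edges incident to that box, and since every vertex has a.s.\ finite degree an infinite cluster survives such surgery), hence has probability $0$ or $1$; conditioning on $x$ then gives $\mathbb P_{x,z,\rho,p}(A_\infty)\in\{0,1\}$ for $\mu_\alpha$-a.e.\ $x$, and monotonicity in $p$ pins down a common $p_c$. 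You should drop the ergodicity framing entirely and present this tail argument directly; translation invariance is neither available nor needed.
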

\begin{proof}
Under 
$\tilde P_{\alpha,  \rho,p}$ the variables $\sigma_u$'s and $\sigma_{(\{u,v\},k)}$'s are
collectively independent.
Consider the $\sigma$-algebra $\bar {\mathcal A}_n$
generated by the variables with index in $\{(v, \{v,u\},k):
 v \in ([-n,n]^d \cap \mathbb Z^d)^c; \{u,v\} \in \mathbb E^d, u, v \in 
([-n,n]^d \cap \mathbb Z^d)^c; k \geq n\}$;
then ${\mathcal A}_{\infty} = \cap_n \bar {\mathcal A}_n$
is trivial under $\tilde P_{\alpha,  \rho,p}$. \\
Since the event $A_{\infty}= \{ \eta| \exists v \in \mathbb Z^d:
|V_v^{(d)}| = \infty \text{ in } \eta\}$
is  such that $\psi_z^{-1}(A_{\infty}) \in \bar {\mathcal A}_n$
for all $n \in \mathbb N$, then
$\psi_z^{-1}(A_{\infty}) \in  {\mathcal A}_{\infty}$
and $\tilde P_{\alpha,  \rho,p}(\psi_z^{-1}(A_{\infty}))=0,1$. 
Thus, $A_{\infty}$ has probability zero or one for
$P_{\alpha, z, \rho,p}$-a.a. $\eta \in H$. Hence,
$P_{x, z, \rho,p}(A_{\infty})=0,1$ for $\mu_{\alpha}$-a.a.
$x \in X$. Since $p_c(x, \rho)$
exists for all $x \in X$, it is $\mu_{\alpha}$ almost surely
constant.
\end{proof} 

We can define 
$p_c(\alpha, \rho) = \inf \{p: \tilde{\mathbb P}_{\alpha,  \rho, p}(\psi_z^{-1}(A_{\infty}))=1\}$. We already know that $p_c(\alpha, \rho) < 1$. 
We see now that $p_c(\alpha, \rho)=0$
when the transmission probabilities for large communities do not
decrease fast enough.

\begin{lem}
For $\alpha \in [1, z^d)$ and $\rho > \frac{\alpha}{z^d}$ we have 
$p_c=p_c(\alpha, \rho)=0$.
\end{lem}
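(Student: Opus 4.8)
The plan is to show that for \emph{every} $p>0$ the event $A_\infty$ has positive probability; combined with the $0$–$1$ law just established this yields $\PP_{\alpha,z,\rho,p}(A_\infty)=1$ for all $p>0$, and hence $p_c(\alpha,\rho)=\inf\{p:\tilde{\mathbb P}_{\alpha,\rho,p}(\psi_z^{-1}(A_\infty))=1\}=0$. The construction is a renormalization along the nested blocks $B^{(k)}:=B_{z,k}(0)$, $k\ge 1$ (each contained in the next by the hierarchical packing), using only the community edges at the top level of each block. For a level-$k$ block $B$ set $H_k(B):=\{v\in B:\ X_v\ge k\}$, so $\EE_{\mu_\alpha}|H_k(B^{(k)})|=z^{kd}\alpha^{-k}=:m_k$, and $m_k\to\infty$ since $\alpha<z^d$. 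In the $\tilde{\mathbb P}_{\alpha,\rho,p}$–representation, any pair $\{u,v\}\subseteq H_k(B)$ lies in the common block $B$ and has $\sigma_u,\sigma_v\ge k$, hence the edge $\{u,v\}$ is open as soon as $\sigma_{(\{u,v\},k)}=1$; since the variables $\big(\sigma_{(\{u,v\},k)}\big)_{\{u,v\}\subseteq H_k(B)}$ are i.i.d.\ $\mathrm{Bernoulli}(p\rho^k)$ and independent of $(X_v)_v$, conditionally on $H_k(B)$ the open subgraph induced on $H_k(B)$ stochastically dominates an Erd\H{o}s--R\'enyi graph $G\big(|H_k(B)|,\,p\rho^k\big)$. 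The crucial quantity is $m_k\,p\rho^k=p\,(z^d\rho/\alpha)^k$, which tends to $\infty$ \emph{precisely} when $\rho>\alpha/z^d$ — this is where the hypothesis enters — so $p\rho^k$ lies far above the connectivity threshold $(\log m_k)/m_k$ of $G(m_k,\cdot)$.

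Call a level-$k$ block $B$ \emph{good} if (i) $|H_k(B)|\ge m_k/2$; (ii) the open subgraph on $H_k(B)$ is connected; and (iii) $B$ contains a vertex $w$ with $X_w\ge k+1$. I claim $\sum_k\PP_{\alpha,z,\rho,p}(B^{(k)}\text{ not good})<\infty$. Write $\lambda:=z^d\rho/\alpha>1$. By Chebyshev (using $\mathrm{Var}|H_k(B^{(k)})|\le m_k$), $\PP(|H_k(B^{(k)})|<m_k/2)\le 4/m_k=4(\alpha/z^d)^k$. Uniformly over $n\ge m_k/2$, the standard two–part bound gives $\PP\big(G(n,p\rho^k)\text{ disconnected}\big)\le\sum_{j\ge1}\binom nj(1-p\rho^k)^{j(n-j)}\le\sum_{j\ge1}\big(z^{kd}e^{-p\lambda^k/4}\big)^j$, which for $k$ large is at most $2z^{kd}e^{-p\lambda^k/4}$. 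Finally $\PP(\text{(iii) fails})\le(1-\alpha^{-(k+1)})^{z^{kd}}\le e^{-m_k/\alpha}$. Since $\alpha<z^d$ and $\lambda>1$, all three bounds are summable in $k$, proving the claim.

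Good blocks chain up: if $B^{(k)}$ is good, fix $w_k\in B^{(k)}$ with $X_{w_k}\ge k+1$ from (iii); then $w_k\in H_k(B^{(k)})$, and since $B^{(k)}\subseteq B^{(k+1)}$ also $w_k\in H_{k+1}(B^{(k+1)})$, so the (connected, by (ii)) sets $H_k(B^{(k)})$ and $H_{k+1}(B^{(k+1)})$ lie in one open cluster. Iterating, on the event $E:=\bigcap_{k\ge k_0}\{B^{(k)}\text{ good}\}$ the union $\bigcup_{k\ge k_0}H_k(B^{(k)})$ is a single open cluster, and it is infinite because $|H_k(B^{(k)})|\ge m_k/2\to\infty$. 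Choosing $k_0$ so large that $\sum_{k\ge k_0}\PP(B^{(k)}\text{ not good})<1$ gives $\PP_{\alpha,z,\rho,p}(E)>0$, hence $\PP_{\alpha,z,\rho,p}(A_\infty)>0$, hence $=1$ by the $0$–$1$ law. Since $p>0$ was arbitrary, $p_c(\alpha,\rho)=0$.

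I expect the main obstacle to be the disconnection estimate: one must control $\PP\big(G(n,p\rho^k)\text{ disconnected}\big)$ \emph{uniformly} in the random value $n=|H_k(B^{(k)})|\in[m_k/2,\,z^{kd}]$ and with enough decay in $k$ to be summable — which is exactly what the two–part bound delivers, thanks to $\lambda>1$. The boundary case $\alpha=1$ is degenerate ($H_k(B)=B$ deterministically and (iii) is automatic), so it needs no separate treatment; note also that only the community-edge variables at level $k$ enter $\{B^{(k)}\text{ good}\}$, so the plain union bound — not any independence across levels — is all that is used.
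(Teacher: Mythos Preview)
Your proof is correct, and it takes a genuinely different route from the paper's, though both rest on the same quantitative fact: writing $\lambda:=z^d\rho/\alpha>1$, the product $p\rho^k\cdot z^{kd}\alpha^{-k}=p\lambda^k\to\infty$, so level-$k$ connections inside $B_{z,k}(0)$ become overwhelmingly abundant.

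The paper builds an explicit infinite \emph{path}: it seeks, step by step, a single vertex $i_k$ in the shell $B_{z,k}\setminus B_{z,k-1}$ with $\sigma_{i_k}\ge k+1$ that is joined to $i_{k-1}$ by one level-$k$ edge variable. The resulting three-state (``found and connected'', ``found but not connected'', ``not found'') process is analyzed as a non-homogeneous Markov chain, and Borel--Cantelli shows the two bad states occur only finitely often; a final nearest-neighbour path is used to connect the infinite cluster back to the origin. You instead work with whole \emph{components}: you show that for all large $k$ the set $H_k(B^{(k)})$ is large, is internally connected by level-$k$ edges (via the Erd\H{o}s--R\'enyi disconnection bound, which is where $p\lambda^k\to\infty$ enters), and contains a high vertex linking it to $H_{k+1}(B^{(k+1)})$; you then conclude via the $0$--$1$ law for $A_\infty$ rather than connecting explicitly to the origin. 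Your route trades the paper's Markov-chain bookkeeping and nearest-neighbour endgame for a standard random-graph estimate; the paper's route, in return, yields a concrete infinite open path without appealing to the connectivity threshold of $G(n,q)$.
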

\begin{proof}
The joint probability $\tilde {\mathbb P}_{\alpha,  \rho, p}$
suggests several dynamic constructions of the epidemics together 
with the reference graph; one  is the following. Starting from
the origin $0$ consider the sequence of boxes $B_{z,k}=B_{z,k}(0)$,
$k=1, \dots$ and sequentially generate the following  variables:
\begin{itemize}
\item[$(0)$] $\sigma_0$; 
\item[$(1_a)$] $\sigma_v, v \in B_{z,1}$; 
\item[$(1_b)$] $\sigma_{\{0,v\},1}, v \in B_{z,1}$; 
\item[\dots] 
\item[$(k_a)$]   $\sigma_v, v \in B_{z,k}\setminus B_{z,k-1}$;   
\item[$(k_b)$] $\sigma_{\{v,u\},j}, u,v \in B_{z,k-1} \setminus B_{z,k-2} ,j=1,\dots,k-1$;
\item[$(k_c)$] $\sigma_{\{v,u\},k}, u \in B_{z,k-1}, v \in B_{z,k} $; 
\item[\dots]
\item[(Last)] $\sigma_{\{u,v\},0}$ for all nearest neighbor pairs $\{u,v\}$.
\end{itemize}
Note that at every step only new $\sigma$ variables are generated,
that the last step can be performed at any time, 
possibly subdivided in several steps, and that the procedure
 generates all relevant
$\sigma$ variables in the positive orthant: in fact, 
if $v',u' \in B_{z,k}\setminus B_{z,k-1}$ then 
$\sigma_{\{v,u\},j}$ is generated at step $(((k+1)_b)$ for
$j=1, \dots, k$ and $(j_c)$ for all $j \geq k+1$; if,
instead, 
$v' \in B_{z,k} \setminus B_{z,k-1}$ and 
$v' \in B_{z,k+r}\setminus B_{z,k+r-1}$, $r \geq 1$, then 
$\sigma_{\{v,u\},j}$  for
$j=1, \dots, k+r-1$ is not generated but it is also not relevant in the process and 
for $j \geq k+r$ is generated at step $(j_c)$.

Following this construction we can show that for 
$\alpha \in [1, z^d)$, $\rho > \frac{\alpha}{z^d}$
and any $p>0$ there is an infinite cluster. We generate
a sequence $i_k , k \in \mathbb N$,
of vertices in $ B_{z,k} \setminus B_{z,k-1}$
or empty sets with the following procedure,
in which  the definition of $i_k$ depends on $3$ events which may occur
depending on the status of $i_{k-1}$:
\begin{itemize}
\item $\text{ if } \sigma_0 \geq 1 \textrm{ then } i_0=0 , \text{ else } i_0= \emptyset$;
\item  if $i_{k-1} \in B_{z,k-1} \setminus B_{z,k-2}$
and $\exists v \in B_{z,k} \setminus B_{z,k-1}$ such that $\sigma_v \geq k+1
\text{ and } \sigma_{\{i_{k-1},v\},k}=1$
then $i_{k}$ equals one of such vertices $v$ (the first in some fixed order); 
\item if $i_{k-1} \in B_{z,k-1} \setminus B_{z,k-2}$
and $\exists v \in B_{z,k} \setminus B_{z,k-1}: \sigma_v \geq k+1
\text{ but for all such $v$'s } \sigma_{\{i_{k-1},v\},k}=0$
then $i_{k}$ equals one of  vertices $v$  with the first two properties
(the first in some fixed order); 
\item if $i_{k-1} \in B_{z,k-1} \setminus B_{z,k-2}$
and for all $ v \in B_{z,k} \setminus B_{z,k-1}$ we have $\sigma_v < k+1$
then $i_{k}= \emptyset $; 
\item  if $i_{k-1} = \emptyset$
and $\exists v \in B_{z,k} \setminus B_{z,k-1}: \sigma_v \geq k+1$
then $i_{k}$ equals one of such vertices $v$ (the first in some fixed order);
\item if $i_{k-1} = \emptyset$
and for all $ v \in B_{z,k} \setminus B_{z,k-1}$ we have $\sigma_v < k+1$
then $i_{k}= \emptyset $; 
\end{itemize}
Given the vertices $i_k$'s we can define the events:
\begin{itemize}
\item $A_k= \{\exists v \in B_{z,k} \setminus B_{z,k-1}: \sigma_v \geq k+1, \sigma_{\{i_{k-1},v\},k}=1 \}$
\item $C_k = \{\exists v \in B_{z,k} \setminus B_{z,k-1}: \sigma_v \geq k+1
\text{ but either } i_{k-1} = \emptyset
\text{ or for all such $v$'s } \sigma_{\{i_{k-1},v\},k}=0 
 \}$
\item $E_k = \{\text{for all }  v \in B_{z,k} \setminus B_{z,k-1} 
\text{ it holds } \sigma_v < k+1
\}$
\end{itemize}
where clearly $A_k$ is not defined if $i_{k-1}= \emptyset$.
Notice that all the events $A_k, C_k$ and $E_k$ are
defined in terms of the variables at steps $(k_a)$ and
$(k_c)$ of the construction outlined above. This implies
that such events are defined in terms of  variables which, once
$i_{k-1}$ is given, are independent from
those involved in defining $A_i, C_i$ and $E_i$ for
$i=1, \dots, k-1$. 
Moreover, for each $k$ the three events form
a partition of the probability space. Therefore, 
the sequence $Z_k = a_k (c_k, e_k \text{ respectively })$
if $A_k (C_k, E_k \text{ respectively })$ occurs, is a (non-homogeneous) Markov chain,
whose transition matrix can be estimated in terms of the $\sigma$ variables.
In fact, 
\begin{eqnarray}
P(Z_k= a_k| Z_{k-1}=a_{k-1})&=& 1 - \Big(1 - \frac{p \rho^k}{\alpha^{k+1}}\Big)^{z^{d k}- z^{d(k-1)} } 
\geq 1 - e^{- \frac{p \rho^k(z^{d k}- z^{d(k-1)})}{\alpha^{k+1}}} \\
P(Z_k= c_k| Z_{k-1}=e_{k-1})&=& 1 - \Big(1 - \frac{1}{\alpha^{k+1}}\Big)^{z^{d k}- z^{d(k-1)} } 
\geq 1 - e^{- \frac{(z^{d k}- z^{d(k-1)})}{\alpha^{k+1}}}
\end{eqnarray}
and all other conditional probabilities are smaller than
$e^{- \frac{p \rho^k(z^{d k}- z^{d(k-1)})}{\alpha^{k+1}}}$ if
$Z_{k-1}=a_{k-1}$ or $Z_{k-1}=c_{k-1}$ and smaller than
$e^{- \frac{(z^{d k}- z^{d(k-1)})}{\alpha^{k+1}}}$ 
 if $Z_{k-1}=e_{k-1}$.
 
\noindent We have
$$
P(Z_k=e_k)=  \sum_{z=a_{k-1}, c_{k-1}, e_{k-1}}P(Z_k=e_k| Z_{k-1}=z) \ P(Z_{k-1}=z)
\ \leq \ e^{- \frac{p \rho^k(z^{d k}- z^{d(k-1)})}{\alpha^{k+1}}}
$$
and 
$$
P(Z_k=c_k) = \sum_{z=a_{k-1}, c_{k-1}, e_{k-1}}P(Z_k=c_k| Z_{k-1}=z) \ P(Z_{k-1}=z) 
\leq  2 \  e^{- \frac{p \rho^{k-1}(z^{d (k-1)}- z^{d(k-2)})}{\alpha^{k+1}}}
$$
so that if $\rho > \alpha/z^d$
$$
\sum_{k=1}^{\infty} P(Z_k=e_k)
< \infty, \quad \sum_{k=1}^{\infty} P(Z_k=c_k)
< \infty.
$$
By the first Borel-Cantelli Lemma
$E_k$ and $C_k$ occur only a finite number of times, so that
with probability one the
sequence terminates with one $C_k$ and then $A_h$ for $h>k$.
In such case the vertex $i_k$ is connected to an infinite cluster
containing all vertices $i_h$ for $h >k$. Since there are countably
many vertices there must be one $k$ and one vertex
$v \in B_{z,k} \setminus B_{z,k-1}$ which is starting vertex of 
an infinite cluster using edges in communities at level at least
$k$ with probability $c_1 >0$. Such vertex can be connected
to the origin using nearest neighbor edges, which are independent
from the previous construction as they were involved only in
the last step of the dynamic joint generation of graph and epidemic, 
with  some probability $c_2 >0$. In the end, the probability of percolation
from the origin is at least $c_1 c_2 >0$.
\end{proof}

\section{Domination by long-range percolation}
The description of the $\alpha-\rho$ phase diagram is completed
by the following result.
\begin{teo} \label{Positivepc}
For $\alpha > z^d$ or $\alpha \in [1, z^d]$ and
$\rho < \alpha/z^d$ we have $p_c >0$.
\end{teo}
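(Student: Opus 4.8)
The plan is to show that for $p$ small enough the expected size $\mathbb E_{\alpha,z,\rho,p}|V_{v_0}^{(d)}|$ of the open cluster of an arbitrary vertex $v_0$ is finite; since $A_\infty=\bigcup_{v}\{|V_v^{(d)}|=\infty\}$ is then a countable union of null events we get $\mathbb P_{\alpha,z,\rho,p}(A_\infty)=0$, and hence $p_c(\alpha,\rho)\ge p>0$. The estimate will come from a connectivity (path) expansion, and the essential device for handling the dependencies is the representation $P_{\alpha,z,\rho,p}=\tilde P_{\alpha,\rho,p}(\psi_z^{-1})$ of Section 4: under $\tilde P_{\alpha,\rho,p}$ the height variables $\sigma_v$ and the edge variables $\sigma_{\{u,v\},k}$ are collectively independent, whereas $P_{\alpha,z,\rho,p}$ itself is only one-dependent.

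First I would fix a self-avoiding path $\gamma=(v_0,v_1,\dots,v_n)$ and bound $\mathbb P_{\alpha,z,\rho,p}(\gamma\text{ open})$. By the form of $\psi_z$, the edge $\{v_{i-1},v_i\}$ can be open only if there is a level $k\ge k_i^{\min}(\gamma)$ — the least level at which $v_{i-1}$ and $v_i$ lie in a common block — with $\sigma_{v_{i-1}},\sigma_{v_i}\ge k$ and $\sigma_{\{v_{i-1},v_i\},k}=1$. Taking a union bound over the level used by each edge, and using the independence just recalled (the vertices of a self-avoiding path are pairwise distinct, and so are its edges), one gets
\[
\mathbb P_{\alpha,z,\rho,p}(\gamma\text{ open})\ \le\ \sum_{k_1,\dots,k_n}\ \prod_{i=1}^{n} p\,\rho^{k_i}\ \prod_{i=0}^{n}\alpha^{-\max(k_i,k_{i+1})},
\]
where each $k_i$ ranges over integers $\ge k_i^{\min}(\gamma)$ and $k_0=k_{n+1}=0$; the second product reflects the fact that the single height $\sigma_{v_i}$ must dominate the levels of both edges incident to $v_i$.

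Next I would sum over paths. For a fixed level vector $(k_1,\dots,k_n)$ the number of self-avoiding paths from $v_0$ with $k_i^{\min}(\gamma)\le k_i$ for all $i$ is at most $\prod_{i=1}^{n}z^{dk_i}$, since $v_i$ must lie in the unique level-$k_i$ block containing $v_{i-1}$ (nearest-neighbour edges are covered by these same levels, or at worst add $2dp$ per step). Decoupling the height factors through the elementary inequality $\sum_{i=0}^{n}\max(k_i,k_{i+1})\ge\sum_{i=1}^{n}k_i$ and writing $\lambda=\lambda(d,\alpha,z,\rho)=2d+\sum_{k\ge1}(z^{d}\rho/\alpha)^{k}$, one obtains
\[
\sum_{\gamma}\mathbb P_{\alpha,z,\rho,p}(\gamma\text{ open})\ \le\ (p\lambda)^{n},
\]
the sum being over self-avoiding paths $\gamma$ of length $n$ starting at $v_0$. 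Under the hypotheses of the theorem $z^{d}\rho/\alpha<1$ — automatic when $\alpha>z^{d}$ (then $\rho\le1<\alpha/z^{d}$) and assumed when $\alpha\in[1,z^{d}]$ and $\rho<\alpha/z^{d}$ — so $\lambda<\infty$ and $\mathbb E_{\alpha,z,\rho,p}|V_{v_0}^{(d)}|\le\sum_{n\ge0}(p\lambda)^{n}<\infty$ whenever $p<\lambda^{-1}$. As this holds for every $v_0$, the theorem follows, with $p_c(\alpha,\rho)\ge\lambda^{-1}$.

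Equivalently — and this is the formulation announced in the introduction — the same estimate is a stochastic domination: since a level-$k$ block has diameter at most $\sqrt d\,z^{k}$ one has $G_{\alpha,z}\subseteq G'_{\alpha,z,\sqrt d}$, so for increasing events the epidemic percolation on $G_{\alpha,z}$ is dominated by a long-range percolation with i.i.d.\ vertex radii $\sqrt d\,z^{\sigma_v}$ in which two in-range vertices are joined through level $k$ with probability $p\rho^{k}$; this is the model of \cite{MeesterTrapman} enriched by the geometric thinning $\rho^{k}$, and the computation above is precisely the extension of their subcriticality criterion to that setting. I expect the main obstacle to be exactly the dependence: $P_{\alpha,z,\rho,p}$ is only one-dependent and, more seriously, two consecutive edges of a path share the height of their common endpoint, so edge marginals cannot simply be multiplied — routing everything through $\tilde P_{\alpha,\rho,p}$, where the underlying variables are genuinely independent, is what makes the path expansion legitimate. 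The second delicate point is sharpness of the constant: a first-moment (expected-degree) estimate would only rule out percolation for $\rho<\alpha^{2}/z^{d}$, which because of the clustering produced by the site variables is far from sufficient; the sharp threshold $\alpha/z^{d}$ comes from the path expansion, where the sharing of a height by the two incident edges leaves one factor $\alpha^{-k_i}$ per edge in place of two — this is what the $\max$-to-sum inequality encodes — and, together with Lemma 4.3, pins down the exact phase boundary.
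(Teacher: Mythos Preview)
Your argument is correct and takes a genuinely more elementary route than the paper's. The paper establishes $p_c>0$ by first proving a full stochastic domination of the nested model by an independent long-range percolation $Q_{\beta',s}$ with $s=\log_z(\alpha/\rho)$ (Theorem~\ref{DomLongRange}); that domination in turn rests on Lemma~\ref{Lemma1} and on an interpolation between the site-based model $\tilde{\mathbb P}'$ and an edge-based model $\tilde{\mathbb P}''$ (Theorems~\ref{teoE1} and~\ref{teo:MT2010}, adapted from Meester--Trapman), in which each site variable $\sigma_v$ is replaced by a family of i.i.d.\ edge variables $\sigma''_{(v,u)}$ with $\beta^2=\alpha$. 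Only after this reduction is the Galton--Watson criterion applied, to $Q_{\beta',s}$. You bypass all of this: working directly under the product law $\tilde{\mathbb P}_{\alpha,\rho,p}$, the union bound over level vectors together with the inequality $\sum_{i=0}^{n}\max(k_i,k_{i+1})\ge\sum_{i=1}^{n}k_i$ leaves exactly one factor $\alpha^{-k_i}$ per edge, which --- combined with the entropy $z^{dk_i}$ per step and the transmission factor $p\rho^{k_i}$ --- gives $\lambda=2d+\sum_{k\ge 1}(z^d\rho/\alpha)^k<\infty$ precisely when $z^d\rho/\alpha<1$. Both routes exploit the same phenomenon (two consecutive edges of a path share one height, so the effective cost is $\alpha^{-1}$ per edge rather than $\alpha^{-2}$), but encode it differently: the paper converts it into an honest stochastic domination valid for all hoppable events, which would transfer further long-range estimates back to the nested model; your computation extracts it algebraically inside a first-moment bound and proves only $\mathbb E|V_{v_0}^{(d)}|<\infty$. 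For the bare statement $p_c>0$ your proof is shorter and entirely self-contained; the paper's, while heavier, delivers Theorem~\ref{DomLongRange} as an independent result of separate interest.
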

This amounts to prove that, with the parameters $\alpha$ and $\rho$ in the
indicated region, there exists $p>0$ such that 
percolation does not occur for that value of $p>0$.
To show this, we actually bound the probability of existence of an infinite percolation cluster or infinite infected area
in the nested model with that in a long-range percolation, for which it
is easy to show that percolation does not occur for some values of the
parameter by bounding it with
a subcritical Galton-Watson process. \\
A long-range percolation model is defined as a probability on the Borel
$\sigma$-algebra in $H$ such that $Q_{\beta, s}(\eta_{\{u,v\}}=1)
=\frac{\beta}{(d(u,v))^s}$. 
\begin{teo} \label{DomLongRange}
When $s=\log_z(\alpha/\rho)$ and 
$\beta'=\frac{p}{1-\rho}(\frac{\alpha}{\rho})^{\frac{1}{2} \log_z d}$, it holds that
$$
\mathbb P_{\alpha, z, \rho,p} ( |V_0^{(d)}|=\infty)
=\tilde {\mathbb P}_{\alpha,  \rho, p} ( \psi_z^{-1}(|V_0^{(d)}|=\infty))
\leq Q_{\beta', s}(|V_0^{(d)}|=\infty).
$$
\end{teo}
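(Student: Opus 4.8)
The plan is to construct an explicit coupling between the epidemic percolation model on the nested network and a long-range percolation model on $\mathbb Z^d$, so that every open edge in the nested model forces an open edge in the long-range model. The starting point is the observation, already implicit in the construction of $\psi_z$, that in the nested model two vertices $u,v$ can be connected only if they lie in a common block $B_{z,k}(i)$ with $\sigma_u,\sigma_v\ge k$ and $\sigma_{(\{u,v\},k)}=1$ for some $k$. If $u,v$ lie in a common block at level $k$ then $d(u,v)\le \sqrt d\, z^k$, so $z^k \ge d(u,v)/\sqrt d$, i.e. $k \ge \log_z(d(u,v)/\sqrt d)$. I would therefore bound, for each pair $\{u,v\}$, the probability that $\{u,v\}$ is open in the nested model by summing the single-community transmission probabilities $p\rho^k$ over all admissible levels $k$, weighted by the probability $\alpha^{-2k}$ that both endpoints reach level $k$ — but here one must be careful because of the one-dependence noted in the paper; the cleaner route is to work directly with the independent variables $\sigma$ on $\Sigma$ and to \emph{drop} the requirement that $u,v$ actually lie in a common block of the correct shape, keeping only the necessary condition on $k$. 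This yields an edge-independent upper-bounding model with connection probability $\le \sum_{k\ge k_0(u,v)} p\rho^k \le \frac{p}{1-\rho}\rho^{k_0(u,v)}$ where $k_0(u,v)=\lceil \log_z(d(u,v)/\sqrt d)\rceil$.

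Next I would simplify this exponent. Since $\rho<1$ we have $\rho^{k_0(u,v)} \le \rho^{\log_z(d(u,v)/\sqrt d)} = (d(u,v)/\sqrt d)^{\log_z\rho} = (d(u,v))^{\log_z\rho}\, d^{-\frac12\log_z\rho}$. The exponent $\log_z\rho$ is negative; writing $s = -\log_z\rho$ would give a power law, but to match the claimed $s=\log_z(\alpha/\rho)$ I must incorporate the $\alpha^{-2k}$ factor coming from the event $\{\sigma_u\ge k,\sigma_v\ge k\}$. The correct bookkeeping: the probability that $\{u,v\}$ is open via level $k$ in the (independent) bounding model is $p\rho^k \cdot \alpha^{-k}$ once one conditions appropriately so that only one endpoint's height is ``charged'' per edge — this is exactly the kind of charging argument that makes the coupling edge-based rather than site-based, and is the delicate point the introduction flags (``although we use edge variables to bound a model based on site variables the result is still sharp''). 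Summing $p(\rho/\alpha)^k$ over $k\ge k_0(u,v)$ gives $\le \frac{p}{1-\rho/\alpha}(\rho/\alpha)^{k_0(u,v)}$, and then $(\rho/\alpha)^{k_0(u,v)} \le (d(u,v)/\sqrt d)^{\log_z(\rho/\alpha)} = (d(u,v))^{-s} (\sqrt d)^{\,s} = (d(u,v))^{-s} d^{\frac12\log_z(\alpha/\rho)}$ with $s=\log_z(\alpha/\rho)>0$ (using $\rho<\alpha/z^d<1$, so indeed $\alpha/\rho>z^d>1$). Collecting constants gives a per-edge bound of the form $\beta'/(d(u,v))^s$ with $\beta' = \frac{p}{1-\rho/\alpha}\,(\alpha/\rho)^{\frac12\log_z d}$; I would then reconcile the prefactor $\frac{p}{1-\rho/\alpha}$ with the stated $\frac{p}{1-\rho}$ by noting $1-\rho \le 1-\rho/\alpha$ for $\alpha\ge 1$, so the stated $\beta'$ dominates and the bound only gets weaker, which is fine for an upper bound.

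Having established the stochastic domination at the level of edge-marginals, I would upgrade it to domination of the increasing event $\{|V_0^{(d)}|=\infty\}$. Because the bounding model is edge-independent and the nested model — realized through $\psi_z$ and the independent $\sigma$'s — is such that each edge variable $\eta_{\{u,v\}}$ is a monotone function of a \emph{disjoint} block of the $\sigma$ variables (the heights $\sigma_u,\sigma_v$ enter many edges, which is the source of the one-dependence, so I must instead encode each edge's openness using only the community-transmission variables $\sigma_{(\{u,v\},k)}$ plus an auxiliary independent randomization to account for the height constraint, à la Strassen/monotone coupling), one gets a coupling under which $\eta^{\text{nested}}_{\{u,v\}} \le \eta^{\text{LR}}_{\{u,v\}}$ edge by edge, almost surely. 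Then $\{|V_0^{(d)}|=\infty\}$ being increasing, the inclusion of clusters gives the claimed inequality of probabilities. The main obstacle, as anticipated, is precisely this last coupling step: turning the edge-marginal domination into an almost-sure edgewise domination despite the shared-height dependence, i.e. justifying that one may ``charge'' the cost $\alpha^{-k}$ to the edge rather than the vertex without losing the factor needed to hit the sharp exponent $s=\log_z(\alpha/\rho)$. I would handle it by a sequential exploration of the cluster of $0$ in which, each time a new vertex $w$ is reached, its height $\sigma_w$ is revealed once and the edges out of $w$ to not-yet-explored vertices are opened using fresh independent transmission variables; the probability that $w$ reaches a not-yet-explored vertex $v$ is then $\le \sum_{k\le \sigma_w} p\rho^k \le \frac{p}{1-\rho}\rho^{k_0(w,v)}$ conditionally, and integrating out $\sigma_w$ (which contributes the $\alpha^{-k}$) against this bound reproduces $\beta'/(d(w,v))^s$; comparing the exploration tree to a branching process governed by these probabilities and dominating that, in turn, by the cluster of $0$ in $Q_{\beta',s}$ completes the argument. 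The details of this branching/exploration comparison are routine once the per-step bound is in hand.
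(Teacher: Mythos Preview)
Your first reduction (replacing the block constraint by the distance constraint $k\ge k_{1,\sqrt d}(u,v)$, then summing the transmission probabilities to get the factor $\frac{p}{1-\rho}\rho^{k_1}$) is exactly the content of Lemma~\ref{Lemma1}, and your final arithmetic producing the exponent $s=\log_z(\alpha/\rho)$ is also correct. The gap is in the middle step, where you try to pass from the site-dependent model to an edge-independent long-range model via sequential exploration. In your exploration, when a new vertex $w$ is reached from $w'$, the event ``$\{w',w\}$ is open'' already forces $\sigma_w\ge k_{1}(w,w')$; hence the height $\sigma_w$ you propose to ``reveal once'' is \emph{not} distributed as a fresh $\alpha^{-k}$ variable but is conditionally biased upward. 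Consequently the step ``integrating out $\sigma_w$ (which contributes the $\alpha^{-k}$)'' is invalid: the factor you extract is $\alpha^{-(k-k_1(w,w'))^+}$, not $\alpha^{-k}$, and the discrepancy is exactly what makes the site-dependence nontrivial. A vertex reached via a high-level edge is, conditionally, a hub, and your branching-process bound does not control this.

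The paper handles this not by exploration but by an interpolation in the spirit of Meester--Trapman: one replaces the single site variable $X_v$ (parameter $\alpha$) by a family of independent \emph{directed-edge} variables $X_{(v,u)}$, $u\ne v$, each with parameter $\beta=\sqrt\alpha$, so that the event $\{X_{(v,u)}\ge k,\ X_{(u,v)}\ge k\}$ has the same probability $\alpha^{-k}$ as $\{X_v\ge k,\ X_u\ge k\}$ had for a single edge, but now edges are genuinely independent. That this replacement can only \emph{increase} the probability of any hoppable connection event is the content of Theorem~\ref{teoE1} (an inductive inequality on ``zero functions'' $z_v(A,B)=\mathbb P(Z_A\cup Z_B)$) together with Theorem~\ref{teo:MT2010}; this is the key technical idea, and it is precisely what your charging/exploration sketch is missing. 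Without it, the edge-marginal bound you obtain does not upgrade to domination of the increasing event $\{|V_0^{(d)}|=\infty\}$, because the nested model is not a product measure on edges and Strassen/Holley does not apply directly.
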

The main difficulty lies in 
the fact that in the nested hierarchical model the distribution on the edges is
one dependent: we face this problem later on. Initially,  we once again compare
 the percolation network to 
$G'_{\alpha, z, \delta}$ endowed with 
slightly larger infection probabilities than
in the nested  model. \\
Let $u,v \in \mathbb Z^d$, define 
$$
k_{1,\delta}(u,v)=\lceil \log_z \frac{d(u,v)}{\delta} \rceil,
$$
and  
consider a Bernoulli
 probability distribution 
$\tilde {\mathbb P'}_{\alpha, z, \rho, p, \delta}$
on the Borel $\sigma$-algebra $\mathcal A$ in $\Sigma'
=\mathbb R^{\mathbb Z^d} \times\{0,1\}^{\mathbb E^d_n } $ such that
\begin{itemize}
\item $\tilde {\mathbb P'}_{\alpha, \rho,p, \delta} \ (\sigma'_v \geq k) = \alpha^{-k} \qquad \text{ for all } v \in \mathbb Z^d$;
\item $\tilde {\mathbb P'}_{\alpha, \rho,p, \delta} \ (\sigma'_{\{u,v\}} =1) = \frac{p}{1-\rho} \ \rho^{k_{1,\delta}(u,v)} \qquad 
\text{ for all } \{u,v\} \in \mathbb E^d.$
\end{itemize}
Consider then the map $\psi'_{z,\delta}: \Sigma' \rightarrow \mathbb E^d$
such that 
$$
(\psi'_{z,\delta}(\sigma'))_{\{u,v\}} = \mathbb I_{ \{
\sigma'_u, \sigma'_v \geq k_1(u,v); \ \sigma'_{\{u,v\}}=1) \}}
$$
\begin{lem} \label{Lemma1}
For all increasing events $A \subseteq H$, 
$\tilde {\mathbb P}_{\alpha,  \rho, p} ( \psi_{z,\delta}^{-1}(A))
 \leq \tilde {\mathbb P'}_{\alpha,z,  \rho, p, \delta} ((\psi'_{z,\delta})^{-1}(A))$
\end{lem}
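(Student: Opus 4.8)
The plan is to build an explicit coupling of $\tilde{\mathbb P}_{\alpha,\rho,p}$ on $\Sigma$ and $\tilde{\mathbb P'}_{\alpha,z,\rho,p,\delta}$ on $\Sigma'$ under which, pointwise, the edge-configuration produced by $\psi_{z,\delta}$ is dominated by the one produced by $\psi'_{z,\delta}$; then, since $A$ is increasing, $\psi_{z,\delta}^{-1}(A)\subseteq (\psi'_{z,\delta})^{-1}(A)$ up to a null set and the inequality of measures follows. First I would observe that for a pair $\{u,v\}$ with $d(u,v)\le\delta z^k$ one has $k\ge k_{1,\delta}(u,v)$, so the relevant level index in the $G'_{\alpha,z,\delta}$-type construction is exactly $k_{1,\delta}(u,v)$: the edge $\{u,v\}$ is open in the primed model iff $\sigma'_u,\sigma'_v\ge k_{1,\delta}(u,v)$ and $\sigma'_{\{u,v\}}=1$. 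On the unprimed side, $\psi_{z,\delta}(\sigma)_{\{u,v\}}=1$ iff there exists $k$ with $\sigma_u,\sigma_v\ge k$ and $d(u,v)\le\delta z^k$, i.e. iff $\min(\sigma_u,\sigma_v)\ge k_{1,\delta}(u,v)$ and at least one of the independent Bernoulli$(p\rho^k)$ trials $\sigma_{\{u,v\},k}$, $k\ge k_{1,\delta}(u,v)$, succeeds.

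The heart of the matter is therefore a \emph{per-edge} comparison of two Bernoulli variables. Couple the heights by the identity $\sigma'_v=\sigma_v$ for all $v$ (both are $\alpha^{-k}$-distributed), so the ``height'' part of the two indicators agrees. It remains to couple, for each fixed edge $\{u,v\}$, the single Bernoulli variable $\sigma'_{\{u,v\}}$ with success probability $\frac{p}{1-\rho}\rho^{k_{1,\delta}(u,v)}$ against the family $\{\sigma_{\{u,v\},k}\}_{k\ge0}$ so that
$$
\sigma'_{\{u,v\}}\ \ge\ \bigvee_{k\ge k_{1,\delta}(u,v)}\sigma_{\{u,v\},k}
$$
almost surely. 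This is possible precisely because the success probability of the disjunction on the right is at most $1-\prod_{k\ge k_{1,\delta}(u,v)}(1-p\rho^k)\le\sum_{k\ge k_{1,\delta}(u,v)}p\rho^k=\frac{p}{1-\rho}\rho^{k_{1,\delta}(u,v)}$, which is exactly the success probability chosen for $\sigma'_{\{u,v\}}$: two Bernoulli variables can always be coupled so that the one with the larger parameter dominates. Doing this independently across edges (and using independence within each model) yields a product coupling $\Lambda$ on $\Sigma\times\Sigma'$ with the required marginals under which $\psi_{z,\delta}(\sigma)\le\psi'_{z,\delta}(\sigma')$ coordinatewise.

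To finish: for $A\subseteq H$ increasing, if $\psi_{z,\delta}(\sigma)\in A$ and $\psi_{z,\delta}(\sigma)\le\psi'_{z,\delta}(\sigma')$ then $\psi'_{z,\delta}(\sigma')\in A$, hence $\Lambda$-a.s. the event $\{\psi_{z,\delta}(\sigma)\in A\}$ is contained in $\{\psi'_{z,\delta}(\sigma')\in A\}$; taking $\Lambda$-probabilities and reading off the two marginals gives $\tilde{\mathbb P}_{\alpha,\rho,p}(\psi_{z,\delta}^{-1}(A))\le\tilde{\mathbb P'}_{\alpha,z,\rho,p,\delta}((\psi'_{z,\delta})^{-1}(A))$. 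The only genuinely delicate point is the per-edge domination: one must check that the inequality $1-\prod_{k\ge m}(1-p\rho^k)\le\frac{p}{1-\rho}\rho^{m}$ holds for every $m$ (it follows from $1-\prod(1-a_k)\le\sum a_k$ together with the geometric sum), and that the coupling of the disjunction of independent Bernoullis with a single dominating Bernoulli can be carried out \emph{measurably and independently over all edges} so as to respect the product structure of both $\tilde{\mathbb P}$ and $\tilde{\mathbb P'}$; the height coordinates cause no trouble since they are coupled by equality.
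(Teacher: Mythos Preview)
Your argument is correct and rests on the same key estimate as the paper's proof, namely
\[
1-\prod_{k\ge k_{1,\delta}(u,v)}(1-p\rho^k)\ \le\ \sum_{k\ge k_{1,\delta}(u,v)}p\rho^k\ =\ \frac{p}{1-\rho}\,\rho^{k_{1,\delta}(u,v)},
\]
which matches the success probability assigned to $\sigma'_{\{u,v\}}$. The paper proceeds by conditioning on the heights $x$: given $x$, both $\tilde{\mathbb P}_{x,\rho,p}(\psi_z^{-1}(\cdot))$ and $\tilde{\mathbb P'}_{x,z,\rho,p,\sqrt d}((\psi'_{z,\sqrt d})^{-1}(\cdot))$ are product Bernoulli laws on $H$, the above inequality compares their one-edge marginals, and stochastic domination of product Bernoulli measures plus integration over $x$ finishes. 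You instead build an explicit product coupling on $\Sigma\times\Sigma'$, matching heights by the identity and, edge by edge, coupling a dominating Bernoulli$\big(\tfrac{p}{1-\rho}\rho^{k_{1,\delta}(u,v)}\big)$ with the disjunction $\bigvee_{k\ge k_{1,\delta}(u,v)}\sigma_{\{u,v\},k}$. These are two presentations of the same idea; your version is arguably cleaner since it avoids invoking FKG and yields pointwise monotonicity of the edge configurations.

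One small point of precision: the map appearing on the unprimed side is the block-based $\psi_z$ (the symbol $\psi_{z,\delta}$ in the statement is a notational slip), so the event ``$\{u,v\}$ open'' is that some $k$ with $u,v\in B_{z,k}(i)$, $\sigma_u,\sigma_v\ge k$ and $\sigma_{\{u,v\},k}=1$ occurs; your ``iff'' description replaces the block condition by the weaker $d(u,v)\le\delta z^k$ and decouples the height and trial constraints. This only \emph{enlarges} the unprimed open-edge event, so your coupling in fact establishes a slightly stronger domination than needed, and the conclusion for increasing $A$ is unaffected.
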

\begin{proof}
Consider the $\sigma$-algebra $\mathcal A_X$ generated by the variables
$\sigma_u, u \in \mathbb Z^d$, and let $\tilde {\mathbb P}_{x , \rho, p} 
=\tilde {\mathbb P}_{\alpha,  \rho, p} (\cdot |x)$
and $\tilde {\mathbb P'}_{x , \rho, p, \delta}$ be the 
conditional probabilities of $\tilde {\mathbb P}_{\alpha,  \rho, p} $
and $\tilde {\mathbb P'}_{\alpha, z, \rho, p, \delta} $, respectively,
given $\mathcal A_X$. Notice that 
the conditional probabilities no longer depend on $\alpha$
and that  $\tilde {\mathbb P}_{x , \rho, p} (\psi_z^{-1})$
and $\tilde {\mathbb P'}_{x ,z, \rho, p, \delta} ((\psi'_{z,\delta})^{-1})$
are Bernoulli distributions on (the Borel $\sigma$-algebra of)
$H$ under which 
$$
\tilde {\mathbb P}_{x , \rho, p} \Big(\psi_{z}^{-1}(\sigma_{\{u,v\}}=1) \Big)
=1-\prod_{k \in I_x(u,v)}(1-  p \rho^k ) 
$$ 
where 
$$
I_x(u,v)=
\{k | \ \exists i \in \mathbb Z^d: u, v \in B_{k,z}(i)
\text{ and } x_u, x_v \geq k\},
$$
and
$$
\tilde {\mathbb P'}_{x , z,\rho, p, \delta}
 \ \Big((\psi'_{z,\delta})^{-1}(\sigma'_{\{u,v\}}=1) \Big)
= \frac{p}{1-\rho} \ \rho^{ \log_z \frac{d(u,v)}{\delta} }
$$
Note also that  $
I_x(u,v) \subset \{k_{1,\sqrt{d}}(u,v),k_{1,\sqrt{d}}(u,v)+1, \ldots ,\min(x_u,x_v) \}$
so that
\begin{eqnarray}
1-\prod_{k \in I_x(u,v)}(1- p \rho^k ) &\leq& 
1-\prod_{k \geq k_{1,\sqrt{d}}(u,v)}(1-  p \rho^k ) \nonumber \\
& = & p \sum_{k \geq {k_{1,\sqrt{d}}(u,v)}} \rho^k - p \sum_{h > k \geq {k_{1,\sqrt{d}}(u,v)}} \rho^{k+h} + \dots \nonumber \\
& \leq & \frac{p}{1-\rho} \ \rho^{ \log_z \frac{d(u,v)}{\sqrt{d}}} \nonumber
\end{eqnarray}
since the series in the second line is alternating with decreasing coefficients. Therefore, 
$$
\tilde {\mathbb P}_{x , \rho, p} (\psi_z^{-1}(\sigma_{\{u,v\}}=1))
\ \leq \
\tilde {\mathbb P'}_{x , \rho, p, \sqrt{d}} \ ((\psi'_z)^{-1}(\sigma_{\{u,v\}}=1))
$$
and $\tilde {\mathbb P'}_{x , \rho, p, \sqrt{d}} \ ((\psi'_z)^{-1})$
dominates in the FKG sense $
\tilde {\mathbb P}_{x , \rho, p} (\psi_z^{-1})$.

\noindent Therefore, if $A \subseteq H$ is increasing then 
$$
\tilde {\mathbb P}_{\alpha,  \rho, p} ( \psi_z^{-1}(A))
= \int_X \tilde {\mathbb P}_{x , \rho, p} (\psi_z^{-1} (A) ) 
\ \mu_{\alpha}(dx)
\leq \int_X \tilde {\mathbb P'}_{x ,z, \rho, p, \sqrt{d}} ((\psi'_{z,\sqrt{d}})^{-1}(A) )
\ \mu_{\alpha}(dx)
 = \tilde {\mathbb P'}_{\alpha,  \rho, p, \sqrt{d}} ((\psi'_z)^{-1}(A))
 $$
\end{proof}

To compare the percolation network $\tilde {\mathbb P'}_{\alpha, z, \rho, p, \sqrt{d}} ((\psi'_{z,\sqrt{d}})^{-1})$ with a long-range percolation network
we are going to prove an analogue of Theorem 3.1 in \cite{MeesterTrapman}.
In this direction there are two main problems. On one side, 
\cite{MeesterTrapman} applies to directed paths; on the other
side, connectivities in \cite{MeesterTrapman} are described by
convex functions $k(X_v,X_u)$ and for values of $X_u=x_u$ 
the connectivities are bounded
by expected values $\bar x_v= E(k(X_v, x_u))$. In that paper
the reason why the connections become independent in different
directions is that the 
$\bar x_v$'s are constant.

The directionality of the paths is easy to fix: paths under 
$\tilde P'_{\alpha,z, \rho, p}$ are not directed, but can trivially be
considered so by fixing an order along each path. Paths are
instead ordered under $\tilde { P'}_{\alpha,  \rho, p, \sqrt{d}} ((\psi'_z)^{-1})$
since the involved edge variables are defined according to an order.
Theorem 3.1 of \cite{MeesterTrapman} applies to hoppable collections
of paths, such as the collection of all self-avoiding paths starting at the
origin and reaching the boundary of some fixed set; since from each path one can extract a self-avoiding one, 
Theorem 3.1 applies to the occurrence of a connection from the origin to the
boundary as well.

As to the connectivity functions, the analogous in the present context would be
$k(X_v, X_u) = (\phi_z(X))_{\{v,u\}}$ which is not convex and cannot be easily
related to any constant value. To proceed, we introduce families of i.i.d. random
variables, one family for each $v \in \mathbb Z^d$, of the form $X''_{(v,u)}, 
u \in \mathbb Z^d \setminus \{v\}$, and then bound $\tilde {\mathbb P'}_{\alpha, z, \rho, p, \sqrt{d}} ((\psi'_z)^{-1})$ by a network based on the $X''_{(v,u)}$'s.
Connections in different directions are independent and depend only on distances,
thus the network based on $X''_{(v,u)}$'s is actually a long-range percolation model.
This is possible if we take the probability that $X''_{(v,u)}\geq k$
greater than or equal to the square root of the probability that 
$X'_{v}\geq k$. This, in turn, implies that in the long-range model
the presence of a vertex is equivalent, in distribution, to the fact
that $X'_{v}\geq k$ for one of its end-points, say the smallest in some fixed order.
While this implies that the probability that the infection travels
a self-avoiding path is larger in the long-range model, 
Theorem \ref{teo:MT2010} below shows the same inequality holds for the 
probability that at least one paths is travelled among those in a fixed
suitable collection.

Consider $\beta > 0$ and a Bernoulli
 probability distribution 
$\tilde {\mathbb P''}_{\beta, z, \rho, p, \delta}$
on the Borel $\sigma$-algebra $\mathcal A$ in $\Sigma''
=\mathbb N^{\mathbb Z^d \times \mathbb Z^d \setminus \{(i,i), i \in \mathbb Z^{d}\}} \times\{0,1\}^{ E^d } $ such that
\begin{itemize}
\item $\tilde { \mathbb P''}_{\beta, z, \rho, p, \delta} (\sigma''_{(u,v)} \geq k) = \beta^{-k}
\text{ for all } (u,v) \in \mathbb Z^d \times \mathbb Z^d \setminus \{(i,i), i \in \mathbb Z^{d}\}$;
\item $\tilde {\mathbb P''}_{\beta, z, \rho, p, \delta} (\sigma''_{\{u,v\}} =1) = \frac{p 
\rho^{k_{1,\delta}(u,v)} }{1-\rho}
\text{ for all } \{u,v\} \in \mathbb E^d.$
\end{itemize}
Consider then the map $\psi''_{z,\delta}: \Sigma'' \rightarrow H$
such that 
$$
\psi''_{z,\delta}(\sigma'')_{\{u,v\}} = \mathbb I_{ \{\sigma''_{(u,v)}, \ \sigma''_{(v,u)} \geq k_{1,\delta}(u,v); \ \sigma''_{\{u,v\}}=1 \}}
$$
and   let
$P''_{\beta, z, \rho, p, \delta}=\tilde {\mathbb P''}_{\beta, z, \rho, p, \delta}
((\psi''_{z,\delta})^{-1})$.
We denote by 
$\tilde {\mathbb P''}_{x'', z, \rho, p, \delta} ((\psi''_{z,\delta})^{-1} )$
the conditional probability given $x'' \in X'' =
\mathbb N^{\mathbb Z^d \times \mathbb Z^d \setminus \{(i,i), i \in \mathbb Z^{d}\}}$.
Note that in passing from $\tilde {\mathbb P'}_{\alpha, z, \rho, p, \delta}$
to $\tilde {\mathbb P''}_{\beta, z, \rho, p, \delta}$ we have changed the network mechanism and kept the same transmission rates.


We introduce an interpolation between 
$\tilde {\mathbb P'}_{\alpha, z, \rho, p, \delta}$
and $\tilde {\mathbb P''}_{\beta, z, \rho, p, \delta}$. To this
purpose we select an ordering of $\mathbb Z^d
=\{v_1, v_2, \dots \}$ and, for $h=0, 1, \dots$,
we 
consider the sequence of sets
$V(0)= \emptyset, \dots,
V(h)=\{v_1, \dots, v_h\}$. For later purposes we take the 
order such that $V(n^d) = B_n = [0,n-1]^d \cap \mathbb Z^d$.
Then  we 
take a sequence of Bernoulli distributions $\tilde P_h$
defined 
on the Borel $\sigma$-algebras $\mathcal A(h)$ of $\Sigma(h)
=\mathbb N^{\mathbb Z^d  \setminus V(h)}
\times \mathbb N^{V(h) \times \mathbb Z^d \setminus \{(i,i), i \in \mathbb Z^{d}\}}
 \times \{0,1\}^{\mathbb  E^d } $ 
 by
\begin{eqnarray*} 
\tilde P_h(\sigma_v \geq k) & = & \frac{1}{\alpha^k} \qquad v \in  \mathbb Z^d  \setminus V(h) \\
\tilde P_h(\sigma_{(v,u)} \geq k) & = & \frac{1}{\beta^k} \qquad v \in  V(h), u \in \mathbb Z^d \setminus v \\
\tilde P_h (\sigma_{\{v,u\}}=1) & = & \frac{p}{1-\rho} \ \rho^{k_{1,\delta}(u,v)}
\end{eqnarray*}
Furthermore, define the map $\psi_{z,h}:\Sigma(h) \rightarrow H$
given by
$$
(\psi_{z,\delta,h}(\sigma))_{\{u,v\}}= \mathbb I_{ \{\sigma_{t(v,u)} \geq k_{1,\delta}(u,v), \ \sigma_{t(u,v)} \geq k_{1,\delta}(u,v), \ \sigma_{\{u,v\}}=1 \}}
$$
where  $t(u,v) = u$  if $u \in \mathbb Z^d \setminus V(h)$
and $t(u,v) = (u,v)$ if $u \in V(h)$.
We have $\tilde P_0(\psi_{z,0}^{-1})= \tilde P'_{\alpha,z, \rho, p,\delta}
(( \psi'_{z})^{-1}).$

Fix now a box $B_n = [0,n-1]^d \cap \mathbb Z^d$ and consider the 
variables $\sigma|_{B_n}$, which are the $\sigma$'s restricted to 
$B_n$, i.e. to the index set
$\{(v), (v,u), \{v,u\}: v,u \in B_n\}$. For $v,u \in B_n$
and $h \leq n^d$,
$(\psi''_{z,\delta,h}(\sigma''))_{\{v,u\}}$
and $(\psi_{z,\delta}(\sigma))_{\{v,u\}}$
depend only from $\sigma''|_{B_n}$ and 
$\sigma|_{B_n}$, respectively. Therefore, 
$\tilde P_{n^d}(\psi^{-1}_{z, n^d})= \tilde { P''}_{\beta, z, \rho, p, \delta}
((\psi'')^{-1}_{z,\delta})$ by the definition of $\tilde P_h$.

Given a box $B_n \subseteq \mathbb Z^d$ and $v \in B_n$,
let $E_{v,n}=\{\{v,u\} : u \in B_n \cap \mathbb Z^d\}$ and  consider
now a pair of (possibly empty) sets $A, B \subseteq E_{v,n}$,
which in our case coincides with both $E'_v$ and $E^*_v$ of \cite{MeesterTrapman},
any $|A|$-dimensional vector $x=(x_1, \dots, x_{|A|}) \in (\mathbb R^+)^{|A|}$ and any
$|B|$-dimensional vector $y=(y_1, \dots, y_{|B|})\in (\mathbb R^+)^{|B|}$.
For a fixed $h$, the values $x$ and $y$ are interpreted as realizations of
$X_u$ if $u \in V(h)$ or $X_{(u,v)}$ if $u \notin V(h)$, respectively.

For $A \subseteq E_{v,n}$ we indicate by $Z_{A}$ the event 
$\{\eta: \eta_{\{v,u\}}=0 \text{ for all } \{v,u\} \in A\} \subseteq H $
that none of the edges of $A$ is open, and for any probability $P$ on 
$H$ we define the zero functions 
$z_v(P;n; A, B; x, y) = P(Z_A \cup Z_B)$
 as the probability
that either none of the edges of $A$ is open or none of the
edges of $B$ is open; for any pair of probabilities 
$P^{(a)}$ and $P^{(b)}$ denote by $z_v(P^{(a)},n) \leq z_v(P^{(b)},n) $
the fact that $z_v( P^{(a)};n; A, B; x, y) \leq 
z_v( P^{(b)};n; A, B; x, y)$ for all 
pairs of disjoint and possibly empty sets of endpoints $A, B \subseteq E_{v,n}$,
all $x \in \mathbb R^{|A|}$ and $y \in \mathbb R^{|B|}$.
The extension of Theorem 3.1 in \cite{MeesterTrapman} that we 
are going to prove uses the following inequality.
\begin{teo}\label{teoE1}
If $\beta^2 = \alpha$ then for all $n,h \in \mathbb N$ such that 
$v_h \in B_n$, $z_{v_h}(\tilde P_{h-1}
(\tilde \psi_{z,\delta, h-1}^{-1}),n) \geq z_{v_h}(\tilde P_{h}(\tilde \psi_{z,\delta, h}^{-1}),n)$.
\end{teo}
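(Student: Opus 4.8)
The plan is to exploit that $\tilde P_{h-1}$ and $\tilde P_h$ differ only in the random mechanism attached to the single vertex $v_h$. Indeed, for any $u\neq v_h$ the height entering an edge $\{v_h,u\}$ (namely $\sigma_u$ if $u\notin V(h)$, and $\sigma_{(u,v_h)}$ if $u\in V(h)$) is a coordinate present with the same distribution under both measures and independent of all $v_h$-side coordinates; and the edge variables $\sigma_{\{v_h,u\}}$ form the same independent Bernoulli family under both. Fix $n,h$ with $v_h\in B_n$. Since $A,B\subseteq E_{v_h,n}$ are disjoint sets of edges all incident to $v_h$, write $A,B$ also for the disjoint sets of their other endpoints, put $k_u=k_{1,\delta}(v_h,u)$, and let $x=(x_u)_{u\in A}$ and $y=(y_u)_{u\in B}$ be the far-endpoint heights prescribed in the zero function $z_{v_h}(\,\cdot\,;n;A,B;x,y)$. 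Under this conditioning an edge $\{v_h,u\}$ is closed as soon as $x_u<k_u$ (for $u\in A$) or $y_u<k_u$ (for $u\in B$), so with $A'=\{u\in A:\ x_u\geq k_u\}$ and $B'=\{u\in B:\ y_u\geq k_u\}$ we have $Z_A=Z_{A'}$ and $Z_B=Z_{B'}$. Condition further on the edge variables $(\sigma_{\{v_h,u\}})$ and set $S_A=\{u\in A':\sigma_{\{v_h,u\}}=1\}$ and $S_B=\{u\in B':\sigma_{\{v_h,u\}}=1\}$, so that $S_A\cap S_B=\emptyset$. Because the joint law of the far-endpoint heights and of $(\sigma_{\{v_h,u\}})$ is the same under $\tilde P_{h-1}$ and $\tilde P_h$ and is independent of the $v_h$-side coordinates, it is enough to compare, for every such $S_A,S_B$, the two conditional probabilities of $Z_A\cup Z_B$ given $x$, $y$ and the edge variables, and then integrate the resulting inequality over the edge variables.

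For these computations put $m_A=\min_{u\in S_A}k_u$ and $m_B=\min_{u\in S_B}k_u$, with $\min\emptyset=+\infty$ and $\alpha^{-\infty}=\beta^{-\infty}=0$. Under $\tilde P_{h-1}$ the vertex $v_h$ carries the single shared height $\sigma_{v_h}$ with $\tilde P_{h-1}(\sigma_{v_h}\geq k)=\alpha^{-k}$, so ``no edge of $A$ is open'' is the event $\{\sigma_{v_h}<k_u\text{ for all }u\in S_A\}=\{\sigma_{v_h}<m_A\}$, and therefore
\[
\tilde P_{h-1}(Z_A\cup Z_B\mid x,y,(\sigma_{\{v_h,u\}}))=\tilde P_{h-1}(\sigma_{v_h}<\max(m_A,m_B))=1-\alpha^{-\max(m_A,m_B)}.
\]
Under $\tilde P_h$ the vertex $v_h$ carries independent directed heights $\sigma_{(v_h,u)}$ with $\tilde P_h(\sigma_{(v_h,u)}\geq k)=\beta^{-k}$, so ``no edge of $A$ is open'' has conditional probability $g(S_A):=\prod_{u\in S_A}(1-\beta^{-k_u})$ (empty product $=1$); and since $A$ and $B$ are disjoint the events for $A$ and for $B$ involve disjoint families among the independent $\sigma_{(v_h,\cdot)}$, whence
\[
\tilde P_h(Z_A\cup Z_B\mid x,y,(\sigma_{\{v_h,u\}}))=g(S_A)+g(S_B)-g(S_A)g(S_B)=1-(1-g(S_A))(1-g(S_B)).
\]
So the statement reduces to the single inequality $\alpha^{-\max(m_A,m_B)}\leq(1-g(S_A))(1-g(S_B))$.

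This last inequality I would check by hand. For any $u^{\ast}\in S_A$ one has $\prod_{u\in S_A}(1-\beta^{-k_u})\leq 1-\beta^{-k_{u^{\ast}}}$, since the remaining factors lie in $[0,1]$; choosing $u^{\ast}$ to attain $m_A$ gives $1-g(S_A)\geq\beta^{-m_A}$, and likewise $1-g(S_B)\geq\beta^{-m_B}$ (both hold trivially when the set is empty, by the conventions above). Hence
\[
(1-g(S_A))(1-g(S_B))\ \geq\ \beta^{-m_A-m_B}\ \geq\ \beta^{-2\max(m_A,m_B)}\ =\ (\beta^{2})^{-\max(m_A,m_B)}\ =\ \alpha^{-\max(m_A,m_B)},
\]
using $m_A+m_B\leq 2\max(m_A,m_B)$, $\beta>1$, and finally the hypothesis $\beta^{2}=\alpha$. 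Integrating the conditional inequality over the common law of the edge variables (for each fixed $x,y,A,B$) then gives $z_{v_h}(\tilde P_{h-1}(\tilde\psi_{z,\delta,h-1}^{-1}),n)\geq z_{v_h}(\tilde P_h(\tilde\psi_{z,\delta,h}^{-1}),n)$, as required. The part I expect to be most delicate is not any single estimate but the bookkeeping behind this reduction: checking that the far-endpoint heights and the edge variables really do have the same joint law under $\tilde P_{h-1}$ and $\tilde P_h$ and are independent of the $v_h$-side heights, that disjointness of $A$ and $B$ genuinely decouples $Z_A$ from $Z_B$ under $\tilde P_h$, and that the degenerate cases $S_A=\emptyset$ or $S_B=\emptyset$ are correctly absorbed by the convention $\min\emptyset=\infty$.
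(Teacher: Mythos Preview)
Your argument is correct and takes a genuinely different route from the paper's own proof.

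The paper does not condition on the edge variables $\sigma_{\{v_h,u\}}$. Instead, it writes out explicit formulas for $\mathbb P_1(Z_A)$, $\mathbb P_1(Z_B)$, $\mathbb P_1(Z_A\cap Z_B)$ by decomposing over the possible values of the single height $\sigma_{v_h}$ (which determines, together with the $q_{u_i}$'s, how many of the edges can be open), and the corresponding product formulas for $\mathbb P_2$. It then proves $\mathbb P_1(Z_A\cup Z_B)\geq \mathbb P_2(Z_A\cup Z_B)$ by induction on $|A|+|B|$: removing the farthest vertex $u_{m+r}$ and comparing the increments, which in turn requires a second induction (on $|B|$) to establish an auxiliary inequality of the form $\beta_{v_r}\big[1-\prod_j(1-q_{w_j})\big]\leq 1-\prod_j(1-\beta_{w_j}q_{w_j})$.

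Your approach sidesteps all of this. By conditioning additionally on the edge Bernoulli variables (which have identical law under $\tilde P_{h-1}$ and $\tilde P_h$ and are independent of the $v_h$-side heights), you reduce the comparison to one between the single shared height $\sigma_{v_h}$ and the independent directed heights $\sigma_{(v_h,u)}$, for fixed finite sets $S_A,S_B$. The resulting inequality $\alpha^{-\max(m_A,m_B)}\leq \beta^{-m_A}\beta^{-m_B}$ is then a one-line consequence of $\beta^2=\alpha$ and $m_A+m_B\leq 2\max(m_A,m_B)$. This is cleaner and more conceptual: it isolates precisely the place where the hypothesis $\beta^2=\alpha$ is used, and avoids the nested inductions entirely. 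The paper's approach, by contrast, yields explicit formulas for the zero functions, which could in principle be used for sharper quantitative estimates, but for the stated inequality your argument is strictly simpler.

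One small remark: you assert that the far-endpoint heights have ``the same joint law'' under the two measures, but in fact they are already fixed to $x,y$ by the definition of the zero function, so what you actually need (and use) is only that the edge variables have the same law and are independent of the $v_h$-side coordinates under both measures. Your reduction from $A,B$ to $A',B'$ correctly handles the case where some prescribed far-endpoint height is too small, which the paper leaves implicit.
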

\begin{proof}
For  fixed $B_n \subset \mathbb Z^d $ and  $v=v_h \in B_n$, notice that the events
$\tilde \psi_{z,\delta, h}^{-1}(Z_A)$ and $ \tilde \psi_{z,\delta, h}^{-1}(Z_B)$  are
measurable with respect to the variables $\sigma_{v}$,
$\sigma_{(v,u)}$ and $\sigma_{\{v,u \}}$ which are indexed in the set
$Z_{v,n}=\{v\} \cup \{\{v,u\}, u \in B_n \setminus \{v\}\}
\cup \{(v,u), u \in B_n \setminus \{v\}\}$. Then let
$A, B \subseteq E_{v,n}$, disjoint, with $|A|=r$ and $|B|=m$,
and $x \in \mathbb R^{|A|}$ and $y \in \mathbb R^{|B|}$
be fixed; we identify each edge in $A$ or $B$ by
its endpont different from $v$.  We then let
\begin{equation}
A \cup B =(u_1, u_2, \ldots , u_{m+r})
\end{equation}
indicate the vertices wihch are endpoints (different from $v$)
 of edges in $A \cup B$, ordered according to the distance of the
endpoint from $v$, which is $d(v,u_i) \leq d(v,u_{i+1})$. We also indicate 
 $A=\{v_1, v_2, \ldots , v_r\}$ and $B=\{ w_1, w_2, \ldots , w_m\}$.
For simplicity of notation denote by $d_{u_i}=d(v,u_i)$ the distance from $v$ to $u_i$ and by $\alpha_{u_i}, \beta_{u_i}$ the following probabilities
\begin{eqnarray}
\alpha_{u_i} & = & \mu_{\alpha} \Big(X_v \geq \log_z \frac{d_{u_i}}{\sqrt{d}}\Big) 
= \alpha^{- \log_z(\frac{d_{u_1}}{\sqrt{d}})}\\ \nonumber
\beta_{u_i} & = & \mu_{\beta} \Big(X_{v,u_i} \geq \log_z \frac{d_{u_i}}{\sqrt{d}}\Big)  = \beta^{- \log_z(\frac{d_{u_1}}{\sqrt{d}})}
\end{eqnarray}
Thus $\alpha_{v_i}=(\beta_{v_i})^2$. Furthermore,
let $q_{u_i}=\frac{p \rho^{k_{1,\delta}(v, u_i)}}{1-\rho}$ and $\mathbb P_1=\tilde P_{h-1}$ and $\mathbb P_2=\tilde P_{h}$; we
 want to prove that
\begin{eqnarray} \label{DisegUnione}
\mathbb P_1(Z_{A} \cup Z_{B}) \geq \mathbb P_2(Z_{A} \cup Z_{B}).
\end{eqnarray} 
Let's proceed by induction on the cardinality of $A$ and $B$. Note that if $|A|= 0$ or $|B|= 0$ then $\mathbb P_1(Z_{A} \cup Z_{B}) = \mathbb P_2(Z_{A} \cup Z_{B}) = 1 $.

$(i)$ Suppose $A =\{u\}$, $B = \{w\}$. By symmetry we can assume that $d_w<d_u$; 
 then $\alpha_w > \alpha_u$ and $\beta_w > \beta_u$. We have  
\begin{eqnarray*}
\mathbb P_1(Z_{A} \cup Z_{B}) & = & 1-\mathbb P_1(Z_{A}^c \cap Z_{B}^c)\\
& = & 1-\alpha_u q_u q_w  \\
& & \\
\mathbb P_2(Z_{A} \cup Z_{B}) & = & 1-\mathbb P_2(Z_{A}^c \cap Z_{B}^c)\\
& = & 1-\mathbb P_2(Z_{A}^c) \ \mathbb P_2(Z_{B}^c)\\
& = & 1-\beta_u q_u \ \beta_w q_w
\end{eqnarray*}
Since $\beta_w > \beta_u$ then $\beta_u \beta_w > \beta_u^2 = \alpha_u$ and
$$ \mathbb P_1(Z_{A} \cup Z_{B}) \geq \mathbb P_2(Z_{A} \cup Z_{B}) \quad \textrm{if } |A|=|B|=1.$$
In particular,  equality holds if $d_u=d_w$. \\

$(ii)$ Now consider $\{ u_1, u_2, \ldots u_{m+r}\}= \{ v_1, v_2, \ldots v_r\} \cup \{ w_1, w_2, \ldots w_m\} = {A} \cup {B}$ such that $d_{u_1} \leq d_{u_2} \leq \ldots \leq d_{u_{m+r}}$. Note that for any probability $\PP$
$$\PP(Z_{A} \cup Z_{B})= \mathbb P(Z_{A}) + \mathbb P(Z_{B} ) - \mathbb P(Z_{A} \cap Z_{B})$$
As before, consider the probability of $Z_{A} \cup Z_{B}$. 
With respect to $\mathbb P_1$, if $X_v < \log_z \frac{d_{v_1}}{\delta}$ then $Z_{A}$ occurs. 
Instead, if $\log_z \frac{d_{v_i}}{\delta} \leq X_v < \log_z \frac{d_{v_{i+1}}}{\delta}$ then there exist $i$ connections 
in the basic graph and $Z_{A}$ occurs if at least one of them is open. Thus
\begin{eqnarray*} 
\PP_1(Z_{A}) & = & (1-\alpha_{v_1}) + \sum_{j=1}^{r-1}(\alpha_{v_j}-\alpha_{v_{j+1}}) 
 \prod_{i=1}^j (1-q_{v_i}) + \alpha_{v_n} \prod_{i=1}^r (1-q_{v_i}) \\
& & \\
\PP_1(Z_{B}) & = & (1-\alpha_{w_1}) + \sum_{j=1}^{m-1}(\alpha_{w_j}-\alpha_{w_{j+1}}) 
 \prod_{i=1}^j (1-q_{w_i}) + \alpha_{w_m} \prod_{i=1}^m (1-q_{w_i}) \\
& & \\
\PP_1(Z_{A} \cap Z_{B}) & = & (1-\alpha_{u_1}) + \sum_{j=1}^{r+m-1}(\alpha_{u_j}-\alpha_{u_{j+1}}) 
 \prod_{i=1}^j (1-q_{u_i}) + \alpha_{u_{n+m}} \prod_{i=1}^{m+r} (1-q_{u_i}) .
\end{eqnarray*}
With respect to $\mathbb P_2$, since edges are open independently of each other, we have
\begin{eqnarray*}
\mathbb P_2(Z_{A}) & = & \prod_{i=1}^r (1-\beta_{v_i} q_{v_i}) \\
& & \\
\mathbb P_2(Z_{B}) & = & \prod_{i=1}^m (1-\beta_{w_i} q_{w_i}) \\
& & \\
\mathbb P_2(Z_{A} \cap Z_{B}) & = & \prod_{i=1}^r (1-\beta_{v_i} q_{v_i}) \prod_{i=1}^m (1-\beta_{w_i} q_{w_i}) .
\end{eqnarray*}
We proceed by induction on $m+r$: we show that if
\eqref{DisegUnione} holds for $m+r-1$ then it holds also for
$m+r$. The vertex $u_{m+r}$ can be either in $A$ or in $B$ and
we assume with no loss of generality that $u_{m+r}=v_r \in A$.
Then we show that if
 $\mathbb P_1(Z_{A'} \cup Z_{B}) \geq \PP_2 (Z_{A'} \cup Z_{B})$ with $|A'|=r-1$, $|B|=m$ then $\mathbb P_1(Z_{A} \cup Z_{B}) \geq \PP_2 (Z_{A} \cup Z_{B})$ with
 $A=A' \cup \{v\}$ and thus $|A|=r$, $|B|=m$. This is equivalent to show that 
\begin{equation}\label{eqZ}
\mathbb P_1(Z_{A} \cup Z_{B}) - \PP_1(Z_{A'} \cup Z_{B} ) \geq  \mathbb P_2(Z_{A} \cup Z_{B}) - \mathbb P_2(Z_{A'} \cup Z_{B} ) .
\end{equation}
By elementary calculation it turns out that
$$\mathbb P_1(Z_{A} ) - \mathbb P_1(Z_{A'} ) = - \alpha_{v_n} q_{v_n} \prod_{i=1}^{r-1} (1-q_{v_i})$$
$$\mathbb P_1(Z_{A} \cap Z_{B} ) - \PP_1(Z_{A'} \cap Z_{B} ) = 
- \alpha_{v_r} q_{v_r} \prod_{i=1}^{r-1} (1-q_{v_i}) \prod_{j=1}^m (1-q_{w_j})$$
thus
$$ \mathbb P_1(Z_{A} \cup Z_{B}) - \mathbb P_1(Z_{A'} \cup Z_{B} ) = 
- \alpha_{v_r} q_{v_r} \prod_{i=1}^{r-1} (1-q_{v_i}) \bigg[1-\prod_{j=1}^m (1-q_{w_j}) \bigg]$$
Similarly, with respect to $\mathbb P_2$ we have
$$\mathbb P_2(Z_{A} ) - \mathbb P_2(Z_{A'}) = 
- \beta_{v_r} q_{v_r} \prod_{i=1}^{r-1} (1-\beta_{v_i} q_{v_i})$$

$$\mathbb P_2(Z_{A} \cap Z_{B} ) - \mathbb P_2(Z_{A'} \cap Z_{B} ) = 
- \beta_{v_r} q_{v_r} \prod_{i=1}^{r-1} (1-\beta_{v_i} q_{v_i}) \prod_{j=1}^m (1- \beta_{w_j} q_{w_j})$$
so that
$$ \mathbb P_2(Z_{A} \cup Z_{B}) - \mathbb P_2(Z_{A'} \cup Z_{B}) = 
- \beta_{v_r} q_{v_r} \prod_{i=1}^{r-1} (1-\beta_{v_i} q_{v_i}) \bigg[1-\prod_{j=1}^m (1-\beta_{w_j} q_{w_j}) \bigg]$$
In order to prove inequality (\ref{eqZ}) we must show 
$$ -\alpha_{v_r} q_{v_r} \prod_{i=1}^{r-1} (1-q_{v_i}) \bigg[1-\prod_{j=1}^m (1-q_{w_j}) \bigg] \geq  
-\beta_{v_r} q_{v_r} \prod_{i=1}^{r-1} (1-\beta_{v_i} q_{v_i}) \bigg[1-\prod_{j=1}^m (1-\beta_{w_j} q_{w_j}) \bigg]$$
Since $\alpha_{v_r} = \beta_{v_r}^2$, this is equivalent to  show that
$$ \beta_{v_r} \prod_{i=1}^{r-1} (1-q_{v_i}) \bigg[1-\prod_{j=1}^m (1-q_{w_j}) \bigg] \leq  
 \prod_{i=1}^{r-1} (1-\beta_{v_i} q_{v_i}) \bigg[1-\prod_{j=1}^m (1-\beta_{w_j} q_{w_j}) \bigg]$$
Since $\beta_{w_i} \leq 1$, $1-q_{w_i} \leq 1-\beta_{w_i} q_{w_i}$, thus 
\begin{equation}\label{eq40}
\prod_{i=1}^{r-1} (1-q_{v_i}) \leq \prod_{i=1}^{r-1} (1-\beta_{v_i} q_{v_i})
\end{equation}
Moreover, we see now that
\begin{equation}\label{eq41}
\beta_{v_r} \bigg[1-\prod_{j=1}^m (1-q_{w_j}) \bigg] \leq  \bigg[1-\prod_{j=1}^m (1-\beta_{w_j} q_{w_j}) \bigg]
\end{equation}
proceeding by induction on $m$. If $m=1$ then
$$\beta_{v_r} q_{w}  \leq  \beta_{w} q_{w} $$
because $v_r$ is the vertex at maximal distance from $u$, so that $\beta_{v_r} \leq  \beta_{w} $. Next we evaluate the increment between the $(m-1)$-th and $m$-th term.
\begin{eqnarray*}
\beta_{v_r} \bigg[1-\prod_{j=1}^m (1-q_{w_j}) \bigg] -\beta_{v_r} \bigg[1-\prod_{j=1}^{m-1} (1-q_{w_j}) \bigg] & = & 
\beta_{v_r} q_{w_m} \prod_{j=1}^{m-1} (1-q_{w_j}) \\
\bigg[1-\prod_{j=1}^m (1-\beta_{w_j} q_{w_j}) \bigg] - \bigg[1-\prod_{j=1}^{m-1} (1-\beta_{w_j} q_{w_j}) \bigg] & = & 
\beta_{w_m} q_{w_m} \prod_{j=1}^{m-1} (1-\beta_{w_j} q_{w_j}) 
\end{eqnarray*}
thus inequality (\ref{eq41}) follows from inequality (\ref{eq40}) and $\beta_{v_n} \leq  \beta_{w_m}$.
\end{proof}

Now we are able to follow Meester and Trapman's work \cite{MeesterTrapman} to  bound from above the probability of large outbreak, i.e. the existence of an infinite open path, by the corresponding quantity in the long-range model. 
In order to prove the results below we need to recall some definitions; the
detailed definitions are in \cite{MeesterTrapman}.
An ordered set of edges in some $E \subseteq \mathbb Z^d \times \mathbb Z^d$
of the form $\xi= (v_0 v_1, v_1 v_2, \dots, v_{n-1} v_n)$ is a (directed) path
from $v_0$ to $v_n$.  A path 
$\xi= (v_0 v_1, v_1 v_2, \dots, v_{n-1} v_n, \dots)$ with infinitely many different
edges is an infinite path. Given a finite or infinite path $\xi= (v_0 v_1, v_1 v_2, \dots, v_{n-1} v_n)$ we indicate the truncation after $k$ edges
as $\xi^s(k)= (v_0 v_1, v_1 v_2, \dots, v_{k-1} v_k)$ and the tail 
starting after $k$ edges as $\xi^t(k)= (v_k v_{k+1},  \dots)$; for 
two paths $\xi_1= (v_0 v_1, v_1 v_2, \dots, v_{n-1} v_n)$
and $\xi_2= (v_n v_{n+1},  \dots)$ we denote the conjunction by
$(\xi_1,\xi_2)= (v_0 v_1, v_1 v_2, \dots, v_{n-1} v_n, v_n v_{n+1},  \dots)$.
Next, let $\Xi$ be a collection of paths; if $E^{(n)}$ is
the collection of the first $n$ edges of $E$ according
to some given enumeration of $E$ then we indicate by $\Xi_n$ the set
of finite paths of $\Xi$ all of which edges are in $E^{(n)}$
together with all the infinite paths of $\Xi$ truncated 
at the first instance they leave $E^{(n)}$.

Furthermore, given a configuration $\eta \in H = \{0,1\}^E$
we say that $\xi$ is open in $\eta$ if  for all edges $\{v_k,v_{k+1}\}$
we have $\eta_{\{v_k,v_{k+1}\}}=1$. And we indicate by
$ C^{\Xi}$ the event that at least one path in $\Xi$ is open.
 We say that $\Xi$ is {\it  hoppable} if 
\begin{itemize}
\item for any $v \in \mathbb Z^d$ and any two paths $\xi $ and $\phi$ 
of $\Xi$
going through $v$, where $v$ is the end vertex of the $i$-th edge of 
$\xi$ and the starting vertex of the $j$-th edge of $\phi$, then
$(\xi^s(i), \phi^t(j))\in \Xi$.
\item $\lim_n  C^{\Xi_n} =  C^{\Xi}$
\end{itemize}

\begin{teo} \label{teo:MT2010}
For every hoppable collection of paths $\Xi$ in $\mathbb E^d$ 
\begin{equation}
 \tilde { P'}_{\alpha, z, \rho, p,\delta} ((\psi'_{z,\delta})^{-1}(C^{\Xi}))
 \leq \tilde P''_{\sqrt{\alpha} ,z, \rho, p,\delta}((\psi''_{z,\delta})^{-1}(C^{\Xi}))
\end{equation}
\end{teo}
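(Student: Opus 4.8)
The plan is to follow the interpolation scheme set up just before the statement: we move from $\tilde P'_{\alpha,z,\rho,p,\delta}$ to $\tilde P''_{\sqrt\alpha,z,\rho,p,\delta}$ one vertex at a time, replacing the single height variable $\sigma_v$ attached to $v=v_h$ by a family of directional variables $\sigma_{(v_h,u)}$, and invoke the Meester--Trapman machinery (Theorem 3.1 of \cite{MeesterTrapman}) at each step with the zero-function monotonicity supplied by Theorem \ref{teoE1}. Concretely, first I would fix a box $B_n$ and work with the finite collection $\Xi_n$, so that only finitely many edges and finitely many $\sigma$ variables are relevant; then $\psi_{z,\delta,h}^{-1}(C^{\Xi_n})$ depends only on $\sigma|_{B_n}$ for each $h\le n^d$, and $\tilde P_0(\psi_{z,0}^{-1})=\tilde P'_{\alpha,z,\rho,p,\delta}((\psi'_z)^{-1})$ while $\tilde P_{n^d}(\psi_{z,n^d}^{-1})=\tilde P''_{\beta,z,\rho,p,\delta}((\psi'')^{-1})$ with $\beta=\sqrt\alpha$, exactly as recorded in the text.

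The core inductive step is: for each $h$ with $v_h\in B_n$,
\begin{equation}
\tilde P_{h-1}\bigl(\psi_{z,\delta,h-1}^{-1}(C^{\Xi_n})\bigr)\ \le\ \tilde P_{h}\bigl(\psi_{z,\delta,h}^{-1}(C^{\Xi_n})\bigr).
\end{equation}
To get this I would apply the abstract comparison theorem of \cite{MeesterTrapman}, which says precisely that if two percolation measures agree on all edges not incident to a given vertex $v$, are both conditionally independent given the ``heights'' at $v$, and one dominates the other in the sense of the zero functions $z_v(\cdot\,;n;A,B;x,y)$ — i.e. $z_{v_h}(\tilde P_{h-1}(\cdot),n)\ge z_{v_h}(\tilde P_h(\cdot),n)$ — then the probability that some path in a hoppable collection is open cannot decrease when passing to the dominated measure. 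Hoppability of $\Xi$ (and hence of $\Xi_n$, after truncation) is exactly the hypothesis of that theorem; the class of self-avoiding paths from $0$ to $\partial B_n$ is hoppable because a concatenation $(\xi^s(i),\phi^t(j))$ at a common vertex is again such a path, and because $C^{\Xi_n}\to C^\Xi$. The monotonicity input $z_{v_h}(\tilde P_{h-1},n)\ge z_{v_h}(\tilde P_h,n)$ is precisely the content of Theorem \ref{teoE1}, valid since $\beta^2=(\sqrt\alpha)^2=\alpha$. Chaining the inequality over $h=1,\dots,n^d$ gives
\begin{equation}
\tilde P'_{\alpha,z,\rho,p,\delta}\bigl((\psi'_{z,\delta})^{-1}(C^{\Xi_n})\bigr)\ \le\ \tilde P''_{\sqrt\alpha,z,\rho,p,\delta}\bigl((\psi''_{z,\delta})^{-1}(C^{\Xi_n})\bigr),
\end{equation}
and then I would let $n\to\infty$, using $\lim_n C^{\Xi_n}=C^\Xi$ (the second hoppability condition) together with monotone/dominated convergence on both sides to pass to $C^\Xi$, yielding the claimed inequality.

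One technical point I would be careful about: in the interpolation measures $\tilde P_h$ on $\Sigma(h)$, the ``source'' vertex of an edge $\{u,v\}$ is $u$ if $u\in\mathbb Z^d\setminus V(h)$ and $(u,v)$ if $u\in V(h)$, so when $v_h$ is being converted, the pair of sets $A,B\subseteq E_{v_h,n}$ playing the role of $E'_v$ and $E^*_v$ in \cite{MeesterTrapman} must be read off correctly, and one has to check that the conditional law of the edges at $v_h$ given everything else has the product-over-$A$, product-over-$B$ structure that Theorem \ref{teoE1} assumes — this is true because the $\sigma_{\{v_h,u\}}$ are globally independent Bernoulli and the only dependence among the edges at $v_h$ under $\tilde P_{h-1}$ comes through the single shared variable $\sigma_{v_h}$, while under $\tilde P_h$ there is no such shared variable. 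The hard part is not any single computation but marshalling the abstract theorem of \cite{MeesterTrapman} correctly in this site-based-versus-edge-based setting: one must verify that the hypotheses of their Theorem 3.1 (conditional independence given the vertex heights, agreement off the vertex, hoppability, and the zero-function domination) are all met by the pair $(\tilde P_{h-1},\tilde P_h)$ after composing with $\psi_{z,\delta,h-1}$ and $\psi_{z,\delta,h}$ — the direction-dependent source convention $t(u,v)$ is what makes the push-forwards line up, and checking that is the main obstacle.
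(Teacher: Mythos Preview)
Your proposal is correct and follows essentially the same route as the paper: interpolate via the measures $\tilde P_h$, use Theorem \ref{teoE1} at the vertex $v_h$ to feed the zero-function domination into the one-vertex step of the Meester--Trapman argument (case 3 of their proof of Theorem 3.1), chain over $h=1,\dots,n^d$, and pass to the limit via the hoppability condition $\lim_n C^{\Xi_n}=C^{\Xi}$. The only cosmetic difference is that the paper writes out the conditioning on $\sigma'|_{\Sigma'_n(h)}$ explicitly as an integral and then points to case 3 of \cite{MeesterTrapman}, whereas you package this as a direct appeal to their abstract theorem; your careful remark about the source convention $t(u,v)$ and the shared variable $\sigma_{v_h}$ under $\tilde P_{h-1}$ is exactly the verification needed to make that appeal legitimate.
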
 
\begin{proof}
We mimic the proof of  Theorem 3.1 of \cite{MeesterTrapman},
dividing the argument into 3 steps.
Since $\tilde { P'}_{\alpha, z, \rho, p,\delta}$ and
$\tilde P''_{\sqrt{\alpha} ,z, \rho, p, \delta}$ are not defined on the same space,
we use the interpolating distributions $\tilde P_{h}$,
which are such that two consecutive ones differ only 
in the variables related to a single vertex.
Fix a box $B_n
=[-n,n]^d \cap \mathbb Z^d$. 

$(i)$ The first step is  to show that for all $n$
and $h$ such that $v_h \in B_n$, 
$\tilde P_{h-1}(\psi^{-1}_{z, \delta,h-1}(C^{\Xi_n})) 
\leq \tilde P_{h}(\psi^{-1}_{z, \delta,h}(C^{\Xi_n}))$.

Since $\beta^2=\alpha$, by Theorem \ref{teoE1}, $z_{v_h}(\tilde P_{h-1},n) \leq z_{v_h}(\tilde P_{h},n)$.
Denote by 
$\Sigma'(h)
=\mathbb N^{\mathbb Z^d  \setminus V(h)}
\times \mathbb N^{(V(h)\setminus v_h) \times \mathbb Z^d \setminus \{(i,i), i \in \mathbb Z^{d}\}}
 \times \{0,1\}^{\mathbb E^d \setminus E_{v,n}}
 =\mathbb N^{\mathbb Z^d  \setminus (V(h) \cup v_h)}
\times \mathbb N^{(V(h-1)) \times \mathbb Z^d \setminus \{(i,i), i \in \mathbb Z^{d}\}}
 \times \{0,1\}^{\mathbb E^d \setminus E_{v,n}} $;
 by $\Sigma_n'(h)$
its restriction to $B_n$, and $\mathcal A'_h$ and $\mathcal A'_{h,n}$
the Borel $\sigma$-algebras generated by the variables in 
$\Sigma'(h)$ and $\Sigma_n'(h)$ respectively.
For all $h$
\begin{eqnarray}
\tilde P_{h} (\psi^{-1}_{z, \delta,h}(C^{\Xi_n}))&=& \int_{\Sigma_n'(h)}
\tilde P_{h} (\psi^{-1}_{z, \delta,h}(C^{\Xi_n})| \sigma'_{\Sigma_n'(h)})d \tilde P_{h} ( \sigma'_{\Sigma_n'(h)})\nonumber \\
&=& \int_{\Sigma_n'(h)}
\tilde P_{h,} (\psi^{-1}_{z, \delta,h}(C^{\Xi_n})| \sigma'_{\Sigma_n'(h)})d \tilde P_{h-1} ( \sigma'_{\Sigma_n'(h)})\nonumber, 
\end{eqnarray}
where for $\sigma' \in \Sigma_n'(h)$, $P( \quad |\sigma')$ is the conditional
probability given $\mathcal A'_{h,n}$; the last equality holds since
$\tilde P_{h}$ coincides with $\tilde P_{h-1}$ on $\mathcal A'_{h,n}$.
Therefore, 
\begin{eqnarray}
\tilde P_{h} (\psi^{-1}_{z, \delta,h}(C^{\Xi_n}))&-&
\tilde P_{h-1} (\psi^{-1}_{z, \delta,h-1}(C^{\Xi_n}))\nonumber \\
&=& \int_{\Sigma_n'(h)}
(\tilde P_{h} (\psi^{-1}_{z, \delta,h}(C^{\Xi_n})| \sigma'_{\Sigma_n'(h)}) -
\tilde P_{h-1} (\psi^{-1}_{z, \delta,h-1}(C^{\Xi_n})| \sigma'_{\Sigma_n'(h)})) d \tilde P_{h-1}( \sigma'_{\Sigma_n'(h)}). \nonumber 
\end{eqnarray}
Now one can follow the proof of Theorem 3.1 in \cite{MeesterTrapman}: if 
the event $C^{\Xi_n}$ occurs  in $\sigma'_{\Sigma_n'(h)}$ regardless of the variables
in $Z_{V_h,n}$,  then the integrand is $0$. Otherwise, one can follow verbatim case 3. of the proof of
Theorem 3.1 in \cite{MeesterTrapman} to conclude that 
$\tilde P_{h-1}(\psi^{-1}_{z, \delta,h-1}(C^{\Xi_n}|\sigma'_{\Sigma_n'(h)})) 
\leq \tilde P_{h}(\psi^{-1}_{z, \delta,h}(C^{\Xi_n}|\sigma'_{\Sigma_n'(h)}))$ for all $h=0, \dots, n^d-1$
and thus the unconditional inequality holds.

 $(ii)$ By iteration, 
$$
 \tilde { P'}_{\alpha, z, \rho, p,\delta} ((\psi'_{z,\delta})^{-1}(C^{\Xi_n}))
= \tilde P_{0}(\psi^{-1}_{z,\delta, 0}(C^{\Xi_n}))
\leq \tilde P_{n^d}(\psi^{-1}_{z,\delta, n^d}(C^{\Xi_n}))
=\tilde { P''}_{\alpha, z, \rho, p,\delta} ((\psi''_{z,\delta})^{-1}(C^{\Xi_n})).
$$

 $(iii)$ In the last step we consider a general hoppable collection of paths $\Xi$. By definition of hoppable collection of paths, since $C^{\Xi_n}$ is 
 decreasing in $n$, it follows that
\begin{eqnarray*}
\tilde { P'}_{\alpha, z, \rho, p,\delta} ((\psi'_{z,\delta})^{-1}(C^{\Xi}))
& = & \lim_{n \rightarrow \infty} 
\tilde { P'}_{\alpha, z, \rho, p,\delta} ((\psi'_{z,\delta})^{-1}(C^{\Xi_n}))\\
 \tilde { P''}_{\alpha, z, \rho, p,\delta} ((\psi''_{z,\delta})^{-1}(C^{\Xi}))& = & \lim_{n \rightarrow \infty}  \tilde { P''}_{\alpha, z, \rho, p,\delta} ((\psi''_{z,\delta})^{-1}(C^{\Xi_n}))
\end{eqnarray*}
and using the previous steps the proof is completed.
\end{proof}

\begin{proof} (of Theorem \ref{DomLongRange}).
For all hoppable collections of paths $\Xi$,
 $C^{\Xi}$ is an increasing event in $H$; moreover,
 $\{|V_0^{(d)}|=\infty\}
= C^{\Xi}$ when $\Xi$ is the collection of all infinite paths
containing the origin. 
If $s=\log_z(\alpha/\rho)$ and 
$\beta'=\frac{p}{1-\rho}(\frac{\alpha}{\rho})^{\frac{1}{2} \log_z d}$
then  
\begin{eqnarray}
\tilde P''_{\sqrt{\alpha},z, \rho, p,\delta}((\psi''_{z,\delta})^{-1}(\eta_{\{u,v\}}=1))
&=& \tilde P''_{\sqrt{\alpha},z, \rho, p,\delta}(\sigma_{(v,u)} \geq k_{1,\delta}(u,v), \ \sigma_{(u,v)} \geq k_{1,\delta}(u,v), \ \sigma''_{\{v,u\}}=1) \nonumber \\
&=& (\sqrt{\alpha})^{-2 k_{1,\delta}(u,v)} \frac{p 
\rho^{k_{1,\delta}(u,v)} }{1-\rho}
\nonumber \\
&=& \frac{p}{1-\rho} \Big(\frac{p}{\alpha} \Big)^{\lceil \log_z \frac{d(u,v)}{\sqrt{d}} \rceil}
\nonumber \\
&\leq& \frac{p}{1-\rho} \ \Big( \frac{\alpha}{\rho} \Big)^{\frac{\log_z{d}}{2}} \ 
d(u,v)^{-\log_z (\frac{\alpha}{\rho})}\nonumber \\
&=&  \frac{\beta'}{(d(u,v))^s}
=Q_{\beta', s}(\eta_{\{u,v\}}=1)
\nonumber
\end{eqnarray}
for a long-range percolation model $Q_{\beta', s}$.
Combining Lemma \ref{Lemma1} and Theorem \ref{teo:MT2010},
we have 
\begin{eqnarray}
P_{\alpha,z,\rho,p}(|V_0^{(d)}|=\infty) &=&
\tilde {\mathbb P}_{\alpha,  \rho, p} ( \psi_z^{-1}(|V_0^{(d)}|=\infty))
\nonumber \\
 &\leq & \tilde {\mathbb P'}_{\alpha, z, \rho, p,\delta} ((\psi'_{z,\delta})^{-1}(|V_0^{(d)}|=\infty))
 \nonumber \\
 &\leq & \tilde 
 P''_{\sqrt{\alpha},z, \rho, p,\delta}
 ((\psi''_{z,\delta})^{-1}(|V_0^{(d)}|=\infty))\nonumber \\
 &\leq & Q_{\beta', s}(|V_0^{(d)}|=\infty) \nonumber
\end{eqnarray}
\end{proof}

\begin{proof} (of Theorem \ref{Positivepc})
In order to establish for which values of the parameters $\alpha,p,\rho,z $ no percolation occurs, it's now sufficient to dominate the long-range percolation 
model $Q_{\beta', s}$ by a subcritical Galton Watson tree. Recall that a GW tree is subcritical, i.e. the probability of extinction is one, if the expected value of the descendants of any vertex is less or equals to one. If $R_v$ denotes
the number of neighbors of a vertex $v$ we have
\begin{eqnarray}
 E_{Q_{\beta', s}}(R_v) = 
 2 d p + \sum_{u \in \mathbb Z^d} \frac{p}{1-\rho} (\frac{\alpha}{\rho})^{\frac{\log_z{d}}{2}}
\frac{1}{d(u,v)^{ \log_z (\frac{\alpha}{\rho})}}
\leq 2 d p + \sum_{k \in \mathbb N} 2 d k^{d-1}
\frac{p}{1-\rho} (\frac{\alpha}{\rho})^{\frac{\log_z{d}}{2}}
\frac{1}{k^{ \log_z (\frac{\alpha}{\rho})}}
< \infty \nonumber
\end{eqnarray}
for all $\rho \in [0,1]$ if $\alpha > z^d$ or for 
$\alpha \in [1,z^d]$ and $\rho < \frac{\alpha}{z^d}$.
\end{proof}

\begin{figure}[t]
\begin{center} 
\includegraphics[height=4in,width=5in]{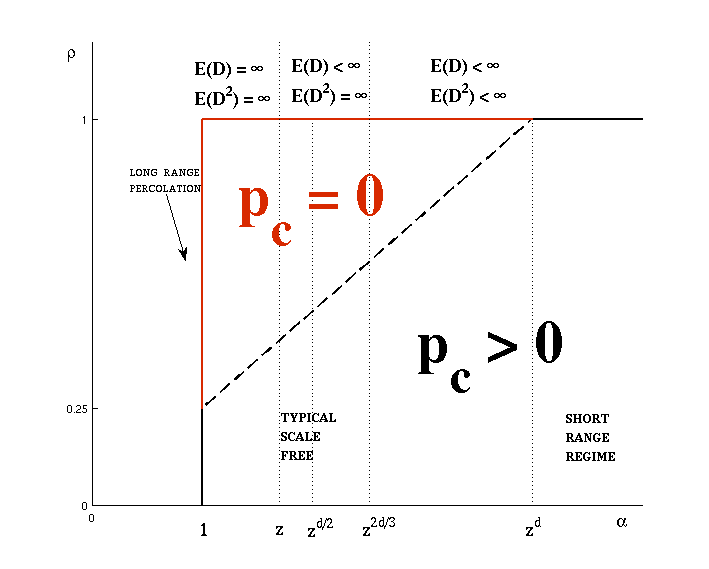}
\caption{\small \sl The phase space of the nested model in the  $\alpha-\rho$ 
plane.\label{fig:sfree}} 
\end{center}
\end{figure}

\newpage

\end{document}